\documentclass[10pt,twoside]{amsart}
\usepackage[all]{xy}
\usepackage{amssymb,latexsym,amsthm,amsmath,mathtools,tikz}
\usepackage{enumitem,color}

\topmargin=1.2cm
\textheight = 8.3in
\textwidth = 5.8in
\setlength{\oddsidemargin}{.8cm}
\setlength{\evensidemargin}{.8cm}

\usepackage{mathtools}
%\usepackage{amssymb}
%\usepackage{amsthm}
%\usepackage{amscd}
%\usepackage{lineno}
%\usepackage{amsmath}
%*********************************
\numberwithin{equation}{section}
\newtheorem{thm}{Theorem}[section]
\newtheorem{cor}[thm]{Corollary}
\newtheorem{lemma}[thm]{Lemma}

\newtheorem{prop}[thm]{Proposition}

\newtheorem*{thma'}{Theorem A'}

\theoremstyle{definition}
\newtheorem{defn}[thm]{Definition}
\newtheorem{example}[thm]{Example}
\newtheorem{remark}[thm]{Remark}
\usepackage{multirow}
\usepackage{float}
\usepackage{longtable}
\usepackage{tikz}
\usepackage{placeins}
\usepackage{array}
\newcolumntype{L}{>{\centering\arraybackslash}m{7cm}}
\usepackage{tabularray}
\tikzstyle{vertex}=[circle,fill=black, draw, inner sep=0pt, minimum size=7pt]
\newcommand{\vertex}{\node[vertex]}
\usetikzlibrary{decorations.markings}
\usetikzlibrary{decorations.pathreplacing}
\usetikzlibrary{arrows.meta}
\usepackage{anyfontsize}
\usepackage{graphicx}

\newcommand{\dlabel}[1]{\ifmmode \text{\ttfamily \upshape [#1] } \else
{\ttfamily \upshape [#1] }\fi \label{#1} }

\newcommand{\gen}[1]{\left < #1 \right >}

\newcommand{\diag}{\operatorname{diag} }

%%%%%%%%%%%%%%%
\begin{document}
\setcounter{section}{0}

\title{Waring problem for triangular matrix algebra}
\author{Rahul Kaushik}
\address{Indian Institute of Science Education and Research Pune, 411008, Maharashtra, India}
\email{rahul.kaushik@acads.iiserpune.ac.in}
%\thanks{}
\author{Anupam Singh}
\address{Indian Institute of Science Education and Research Pune, 411008, Maharashtra, India}
\email{anupamk18@gmail.com}
\thanks{The first named author is funded by NBHM postdoctoral fellowship 0204/4/2023/R\&D-II/884 for this work. The second named author is funded by NBHM through a research project 02011/23/2023/NBHM(RP)/ RDII/5955 for this work.}
\subjclass[2010]{11P05, 11G25}
\keywords{Waring problem, Lang-Weil estimate, triangular matrices.}

%%%%%%%%%%%%%%%%%%%%%
\begin{abstract} 
The Matrix Waring problem is if we can write every matrix as a sum of $k$-th powers. Here, we look at the same problem for triangular matrix algebra $T_n(\mathbb{F}_q)$ consisting of upper triangular matrices over a finite field. We prove that for all integers $k, n \geq 1$, there exists a constant $\mathcal C(k, n)$, such that for all $q> \mathcal C(k,n)$, every matrix in $T_n(\mathbb{F}_q)$ is a sum of three $k$-th powers. Moreover, if $-1$ is $k$-th power in $\mathbb{F}_q$, then  for all $q>\mathcal C(k,n)$, every matrix in $T_n(\mathbb{F}_q)$ is a sum of two $k$-th powers. We make use of Lang-Weil estimates about the number of solutions of equations over finite fields to achieve the desired results.
\end{abstract}

\maketitle

%%%%%%%%%%%%%%%%%%%%%%%
\section{Introduction}

The Matrix Waring problem is a generalisation of the classical Waring problem in Number Theory to a non-commutative setup which is as follows. Let $R$ be a ring with identity and $M_n(R)$ be the $n$-by-$n$ matrices over $R$. For a given $k$, a positive integer, the Matrix Waring problem asks if there exists a (smallest if it exists) number $g(k)$ such that every element of $M_n(R)$ can be written as a sum of $g(k)$ many $k$-th powers. That is, for all $A\in M_n(R)$ the equation $X_1^k + X_2^k + \cdots + X_{g(k)}^k =A$ has a solution. Many interesting results are known in this direction, mainly due to the work of Vaserstein, Richman, Katre, Garge, Bresar and Semrl (see~\cite{Va87, Ri87, Ga21, KG13, BS22, BS23}) to name a few. This problem can be also thought of as a particular case of a more general problem about studying images of polynomial maps on rings. 

In recent years the question of surjectivity, arising from word maps on groups (defined by evaluation using an element of a free group called words in $m$-variable) has been a central problem in group theory. Several deep results have been proved for finite simple groups, especially by Larsen, Shalev and Tiep (see for example~\cite{LST11, LST19}). Parallel to these problems, polynomial maps on algebras (defined by evaluation using an element of a polynomial ring in $m$-non-commuting variables) have been well studied (see the survey article by Kanel-Belov, Malev, Rowen and Yavich~\cite{BMRY20} for more information). In recent work, Kishore \cite{KK22}, Kishore-Singh \cite{KA22}, motivated by a question of Larsen, proved the following: for large enough $q$, every element of $M_n(\mathbb F_q)$ is a sum of two $k$-th powers. In the present work, we begin an exploration of this problem for algebras beyond central simple algebra, for example, if we can get results as suggested by Larsen for upper triangular matrix algebra $T_n(\mathbb F_q)$. An analogous problem in the case of triangular groups namely the Verbal width for triangular matrix group and $p$-powers is studied in~\cite{So16, DSY21} respectively. In a recent work, Panja and Prasad~\cite{SP23} have characterised images of polynomials on $T_n(\mathbb K)$, when $\mathbb K$ is algebraically closed.  

Let $T_n(\mathbb F_q)$ (respectively $B_n(\mathbb F_q)$, $U_n(\mathbb F_q)$, $N_n(\mathbb F_q)$) be the set of $n \times n$  upper triangular matrices (respectively nonsingular,  unipotent, nilpotent upper triangular matrices). Now our main question is as follows: 
\begin{quote}
When can we write every element of $T_n(\mathbb{F}_q)$ as a sum of two (or three) $k$-th powers over large enough finite fields? 
\end{quote}
To answer the above question we prove the following:

\begin{thm}\label{thma}
Let $k\geq 2, n \geq 2$ be integers. Then,
\begin{enumerate}
\item there exists a constant $\mathcal C(k,n)$ depending on $k $ and $n$ such that for all $q> \mathcal C(k, n)$, every element of $T_n(\mathbb{F}_q)$ can be written as a sum of three $k$-th   powers $X^k + Y^k + Z^k$, where $X$ is diagonalizable and $Y, Z$ are diagonal matrices. The constant $\mathcal C(k, n)$ can be taken to be $4n^2k^{16}$.
\item Further, if $-1$ is a $k$-th power in $\mathbb{F}_q$, then for all $q> \mathcal C({k,n})=4n^2k^{16}$, every element of $T_n(\mathbb{F}_q)$  can be written as sum of two $k$-th   powers $X^k + Y^k$, where $X$ is diagonalizable and $Y$ is a diagonal matrix. 
\end{enumerate}
\end{thm}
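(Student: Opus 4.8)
The plan is to reduce the matrix problem to a pair of problems over $\mathbb{F}_q$ itself: one for the diagonal part and one for the strictly-upper-triangular part. First I would recall the known scalar Waring result: by a Lang--Weil / Weil-bound argument (this is essentially the $n=1$ case, and the engine behind Kishore--Singh), for $q > \mathcal{C}(k,1)$ every element of $\mathbb{F}_q$ is a sum of two $k$-th powers, and if $-1$ is a $k$-th power then in fact many representations exist; one also gets that the set of $k$-th powers, together with the fibers of $x \mapsto x^k$, is large enough that a suitable system of equations has solutions. The threshold $4n^2k^{16}$ suggests that after the reduction the binding constraint is still a one-variable count, with the extra factor $n^2$ absorbing the $\binom{n}{2}$-many strictly-upper entries that must be handled.

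Second, for the diagonal part: given a target $A \in T_n(\mathbb{F}_q)$ with diagonal entries $a_1,\dots,a_n$, I would choose $Y = \operatorname{diag}(b_1,\dots,b_n)$ and $Z = \operatorname{diag}(c_1,\dots,c_n)$ (in part (2), just $Y$) so that the diagonal of $X^k$ is forced to be some prescribed tuple $(\lambda_1,\dots,\lambda_n)$ with the $\lambda_i$ \emph{distinct} and each $\lambda_i = a_i - b_i^k - c_i^k$ a $k$-th power in $\mathbb{F}_q$ — possible for $q$ large because each such condition removes only $O(k)$-many bad values of $b_i$ (or $b_i,c_i$) and we have $q$ choices. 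Distinctness of the $\lambda_i$ is the point that lets me take $X$ \emph{diagonalizable}: I want $X = P D P^{-1}$ with $D = \operatorname{diag}(\mu_1,\dots,\mu_n)$, $\mu_i^k = \lambda_i$, and then $X^k = P\operatorname{diag}(\lambda_1,\dots,\lambda_n)P^{-1}$.

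Third, the strictly-upper-triangular part: once the $\lambda_i$ are distinct, I need to find an upper-triangular invertible $P$, or equivalently work inside $T_n$, so that $X^k = P\,\Lambda\,P^{-1}$ (with $\Lambda = \operatorname{diag}(\lambda_i)$) has the prescribed strictly-upper entries $A - \operatorname{diag}(a_i)$. Here I would use the fact that conjugating $\Lambda$ by a unipotent upper-triangular $P = I + N$ gives $X^k = \Lambda + (\text{stuff involving } N \text{ and the }\lambda_i)$, and because the $\lambda_i$ are distinct the map $N \mapsto \Lambda N - N\Lambda$ is invertible on each super-diagonal, so by an inductive (super-diagonal by super-diagonal) argument I can solve for $N$ to match the entries of $A$ exactly; equivalently, one shows the centralizer-type equation $X^k = A$ in the $P$-unknown has a solution whenever the eigenvalues are separated. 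This gives $X$ with $X^k$ having exactly the right diagonal and strictly-upper part, i.e. $X^k + Y^k + Z^k = A$.

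The main obstacle I expect is making the two demands on the $\lambda_i$ \emph{simultaneously} achievable with an \emph{effective} threshold: each $\lambda_i$ must be a $k$-th power (a density-$\approx 1/\gcd(k,q-1)$ condition, handled by the scalar count), must be \emph{nonzero} (so that $X$ is invertible and the conjugation argument is clean — or I allow some $\lambda_i = 0$ and argue separately), and the $n$ values must be pairwise distinct, all while $b_i$ (and $c_i$) range over $\mathbb{F}_q$. Counting the number of admissible tuples $(b_i)$ (or $(b_i,c_i)$) via inclusion--exclusion against these $O(nk)$ forbidden conditions, and checking the count is positive precisely when $q > 4n^2k^{16}$, is where the Lang--Weil estimate and the bookkeeping do the real work; everything after the $\lambda_i$ are fixed is linear algebra over $\mathbb{F}_q$ and carries no arithmetic content.
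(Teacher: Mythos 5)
Your proposal is correct and is essentially the paper's own argument: peel off diagonal $k$-th powers so that what remains keeps the strictly upper-triangular part of the target but has pairwise distinct $k$-th-power diagonal entries (the existence of such choices being exactly the Lang--Weil counting of representations $\lambda=\mu^k+b^k$ carried out in the paper's Section 3, with the hypothesis that $-1$ is a $k$-th power doing its work precisely at the zero diagonal entries, and the three-power case absorbing the extra variable to avoid that hypothesis), and then observe that an upper-triangular matrix with distinct $k$-th-power diagonal entries is itself the $k$-th power of a diagonalizable element of $T_n(\mathbb F_q)$. Your route to this last fact, via unipotent upper-triangular conjugation of $\operatorname{diag}(\mu_1,\dots,\mu_n)$ solved superdiagonal by superdiagonal, is a repackaging of what the paper does with its explicit entry-solving lemma in Section 5 together with the Belitski\v{i}-form observation that distinct diagonal entries force a diagonal canonical form, and your greedy selection of the distinct $\lambda_i$ plays the role of the paper's propositions on systems of equations (indeed it even avoids their unnecessary requirement that the diagonal parts $y_i^k$ be distinct).
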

\noindent Note that writing an element of $T_n(\mathbb{F}_q)$ as a sum of two $k$-th powers in $M_n(\mathbb{F}_q)$ is not helpful. In Example~\ref{example-necessary2} we give an example of the same. In Example~\ref{example-necessary} we show that the condition $-1$ being a $k$-th root is a necessary condition for writing an upper triangular matrix as a sum of two $k$-th powers.

Our key idea is to use Lang-Weil estimates on the number of solutions of an equation over finite fields. A crude way to look at our problem is to simultaneously solve $n(n+1)/2$ equations obtained from $A=X_1^k + X_2^k$ over a finite field and see if a solution exists. Thus, the idea that over a large enough finite field $\mathbb F_q$ we could obtain such a solution is what we need. This is done in~\cite{KK22, KA22} for $M_n(\mathbb F_q)$.  

To write an element as a sum of $k$-th powers, it is enough to do this for a similarity class (sometimes we may also call it conjugacy class) representative. That is, if $A = X_1^k + X_2^k +\cdots $, then $P^{-1}AP = (P^{-1}X_1P)^k + (P^{-1}X_2P)^k +\cdots$, where $P$ is an invertible matrix in $T_n(\mathbb{F}_q)$. However, the similarity in $T_n(\mathbb F_q)$ is trickier than that of in $M_n(\mathbb{F}_q)$. For any $A,B \in T_n(\mathbb{F}_q)$, we say that $A$ and $B$ are $B_n$-similar or conjugate, if there exist some $P \in B_n(\mathbb F_q)$ such that $B=P^{-1}AP$. For each $B_n$-similarity class, there is a nice representative which is called Belitski\v{i}'s canonical form of the similarity or conjugacy class. For every matrix $A$,  Belitski\v{i}'s canonical form of $A$ will be denoted by $A^{\infty}$. We recall this in Section~\ref{section-similarity-classes}. Given this, our problem gets reduced to writing every representative of similarity class of $T_n(\mathbb{F}_q)$ as a sum of two or three $k$-th powers. We prove our main theorem in Section~\ref{section-proof-theorem}. An exact description of the $B_n$-similarity/conjugacy classes is described in~\cite{MMH} up to $8 \times 8$ upper triangular matrices. We use the same to give an explicit computation in Section~\ref{section-table} of how we can write each representative as a sum of two $k$-th powers. 

We denote by $p$ the characteristic of the field $\mathbb F_q$.  By $\mathbb F_q^m$ we mean the direct product $\mathbb F_q \times \ldots \times \mathbb F_q$ $m$-times. We denote by $E_{rs}$, the elementary matrix with the $(r,s)$ entry $1$ and $0$ elsewhere.

%%%%%%%%%%%%%%%%%%%%%%%%%%
\section{Some examples and computations in $T_n(\mathbb F_q)$}

Let us set 
$$f(x,y) = x^{k-1} + x^{k-2}y+ \cdots + xy^{k-2} + y^{k-1}$$ to be a homogeneous polynomial of degree $k-1$ in $\mathbb{F}_q[x,y]$. 
We begin with the following lemma, which tells about the zeroes of the polynomial $f(x,y)$.
\begin{lemma}\label{homeqn}
Let $k > 1$ be any integer. Consider $f(x,y) = x^{k-1} + x^{k-2}y+ \cdots + xy^{k-2} + y^{k-1}$, a homogeneous polynomial of degree $k-1$ in $\mathbb{F}_q[x,y]$. Then, there exists $a, b \in \mathbb{F}_q$, not both zero, with $f(a,b)=0$ if and only if one of the following happens (i) $a=b$ and $p\mid k$, (ii) $a\neq b$ and $a^k=b^k$. 
\end{lemma}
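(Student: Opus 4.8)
The plan is to exploit the elementary factorization identity
\begin{equation*}
(x-y)\,f(x,y) = x^k - y^k \quad \text{in } \mathbb{F}_q[x,y],
\end{equation*}
which follows immediately from telescoping the sum defining $f$. This reduces the analysis of the zero set of $f$ to two cases according to whether the evaluation point $(a,b)$ lies off or on the diagonal $x=y$.

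First I would treat the off-diagonal case $a\neq b$. Here $a-b$ is a unit in $\mathbb{F}_q$, so multiplying by $a-b$ is invertible and $f(a,b)=0$ is equivalent to $(a-b)f(a,b)=a^k-b^k=0$, i.e.\ to $a^k=b^k$; moreover $a\neq b$ forces $(a,b)$ to be nonzero automatically. This gives exactly alternative (ii), in both directions.

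Next I would treat the diagonal case $a=b$. Substituting $y=x$ collapses $f$ to a sum of $k$ equal terms, so $f(a,a)=k\,a^{k-1}$ (the integer $k$ read in $\mathbb{F}_q$). The hypothesis that $(a,b)=(a,a)$ is not both zero means $a\neq 0$, hence $a^{k-1}\neq 0$, so $f(a,a)=0$ if and only if $k=0$ in $\mathbb{F}_q$, i.e.\ $p\mid k$. This gives alternative (i), again in both directions. Conversely, if (i) holds one takes any nonzero $a$ and sets $b=a$; if (ii) holds the point $(a,b)$ is already nonzero and $f(a,b)=0$ by the argument above. Assembling the two cases yields the stated equivalence.

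There is essentially no serious obstacle here; the only point requiring a little care is the bookkeeping of the ``not both zero'' clause, which must be used precisely in the diagonal case to guarantee $a^{k-1}\neq 0$, and is automatic in the off-diagonal case. The identity $(x-y)f(x,y)=x^k-y^k$ is the whole engine of the proof.
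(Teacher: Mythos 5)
Your proof is correct and rests on the same underlying idea as the paper's: the paper dehomogenizes via $h(x)=\frac{x^k-1}{x-1}$ (with $h(1)=k$) and writes $f(x,y)=y^{k-1}h(x/y)$, which is just the one-variable form of your identity $(x-y)f(x,y)=x^k-y^k$. Your direct case split on $a=b$ versus $a\neq b$ is a slightly cleaner packaging (it avoids the separate $y=0$ case), but it is essentially the same argument.
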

\begin{proof}
Let $h(x)= x^{k-1}+ x^{k-2}+ \cdots + x+1 $ be a polynomial in $\mathbb{F}_q[x]$. Then \[ h(x) = \begin{cases}   \frac{x^k-1}{x-1} & x \neq 1,\\ k & x=1. \end{cases}   \]
Now it is clear that $h(\alpha)=0$ if and only if one of the following happens: (i) $\alpha = 1 $ and $p\mid k$, (ii) $\alpha \neq 1$ and $\alpha^k = 1$. We can express $f$ as follows, 
\[ f(x,y)  = \begin{cases} y^{k-1}h(x/y) & y \neq 0,\\ x^{k-1} & y=0. 
\end{cases}\]
Thus, $f(x,y) = 0$ if and only if $x = y = 0$ or $h(x/y)=0$, which implies either $x=y$ and $p\mid k$ or $x\neq y$ and $x^k=y^k$.  Hence there exist $a, b \in \mathbb{F}_q$, not both zero, with $f(a,b)=0$ if and only if either $a=b$ and $p\mid k$, or $a\neq b$ and $a^k=b^k$.
\end{proof}

\noindent In the following lemma, first, we see that, not every matrix of $T_n(\mathbb{F}_q)$ can be written as a $k$-th   power (compare this with Lemma~\ref{mult}). 
\begin{lemma}\label{2*2}
Suppose $p \mid k$. Let $C$ be a matrix of the form \[ \begin{bmatrix} \beta^k & \alpha   \\     0 & \beta^k \\ \end{bmatrix}, \] 
where $k \geq 2$ and $\alpha \neq 0, \beta \in \mathbb{F}_q$. Then $C\neq A^k$ for any $k \geq 2$ and for any $A \in T_n(\mathbb{F}_q)$.
\end{lemma}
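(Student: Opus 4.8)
The plan is to reduce everything to an explicit $2\times 2$ computation (implicitly $n=2$ here, since $C$ is $2\times 2$, so any $A$ with $A^k=C$ lies in $T_2(\mathbb{F}_q)$). Write $A = \begin{bmatrix} a & b \\ 0 & c \end{bmatrix} \in T_2(\mathbb{F}_q)$. First I would establish by a short induction on $k$ that
\[ A^k = \begin{bmatrix} a^k & b\, f(a,c) \\ 0 & c^k \end{bmatrix}, \]
where $f(x,y) = x^{k-1} + x^{k-2}y + \cdots + y^{k-1}$ is the polynomial of Lemma~\ref{homeqn}; the inductive step just multiplies $A^{k-1}$ by $A$ and recognizes the sum $a^{k-1} + a^{k-2}c + \cdots + c^{k-1}$ appearing in the $(1,2)$ entry.

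Comparing entries of $A^k$ with $C$ then yields the system $a^k = c^k = \beta^k$ and $b\, f(a,c) = \alpha$. Since $\beta \in \mathbb{F}_q^*$ we get $a^k = c^k = \beta^k \neq 0$, hence $a, c \in \mathbb{F}_q^*$; and since $\alpha \in \mathbb{F}_q^*$ we must have $f(a,c) \neq 0$. Now I would invoke Lemma~\ref{homeqn} for the pair $(a,c)$, which is not $(0,0)$: if $a = c$ then $f(a,c) = k a^{k-1} = 0$ because $p \mid k$, while if $a \neq c$ then $a^k = c^k$ forces $f(a,c) = 0$ by case (ii) of that lemma. Either way $f(a,c) = 0$, contradicting $f(a,c) \neq 0$; hence no such $A$ exists.

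The argument is entirely routine, and the only point that needs a little care is that $\beta^k \neq 0$ is precisely what guarantees $a, c$ are nonzero, so that Lemma~\ref{homeqn} is applicable and the dichotomy $a = c$ versus $a \neq c$ is exhaustive. I do not expect any substantial obstacle.
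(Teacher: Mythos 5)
Your proof is correct and follows essentially the same route as the paper: compute $A^k$ for $A=\begin{bmatrix} a & b\\ 0 & c\end{bmatrix}$, compare entries to get $a^k=c^k=\beta^k$ and $b\,f(a,c)=\alpha$, and then use Lemma~\ref{homeqn} (together with $p\mid k$ in the case $a=c$) to force $f(a,c)=0$ and hence the contradiction $\alpha=0$. The only cosmetic difference is that you derive $a,c\neq 0$ directly from $\beta^k\neq 0$, whereas the paper rules out $a=c=0$ via $f(a,c)=0$; the substance is identical.
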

\begin{proof}
Let $A = \begin{bmatrix}   a & b   \\    0 & c \\ \end{bmatrix}$ be in $T_n(\mathbb{F}_q)$ with $A^k=C$. Then, $A^k = \begin{bmatrix}   a^k & bf(a,c)   \\    0 & c^k \\ \end{bmatrix}$ gives $a^k=c^k=\beta^k$ and $bf( a,c)=\alpha$. If $\beta=0$, then $a=0=c$.  Then $f(a,c)=0$ implies $\alpha=0$, which is not possible. Now let us assume that $\beta \neq 0$.  Then  $a$ and $c$ can not be $0$. If $a=c$,  then $f(a,c)=ka^{k-1}$ and hence we get $bka^{k-1} =\alpha$, which gives us $\alpha=0$ as $p\mid k$, a contradiction. Thus we must have $a\neq c$. Since  $a^k=c^k$, and therefore by Lemma~\ref{homeqn} we get that $f(a,c)=0$, which gives $\alpha=0$, not possible. Thus $C\neq A^k$ for any $k \geq 2$ and $A \in T_n(\mathbb{F}_q)$.
\end{proof}

\begin{example}\label{example-necessary}
In Theorem~\ref{thma}(2), the condition of $-1$ being $k$-th  power is necessary to write every matrix as a sum of two $k$-th   powers. To see this, let us assume that the following equation of matrices holds in $T_2(\mathbb{F}_q)$,
\[\begin{bmatrix}    0 & 1   \\     0 & 0  \\ \end{bmatrix} = \begin{bmatrix}     a & b   \\     0 & c  \\ \end{bmatrix}^k + \begin{bmatrix}     u & v   \\     0 & w  \\ \end{bmatrix}^k. 
\]
This leads to the following system of equations in $\mathbb{F}_q$, $ a^k+ u^k = 0, c^k + w^k = 0$ and $b(a^{k-1}+ca^{k-2}+\cdots +c^{k-1})+ v(u^{k-1}+wu^{k-2}+\cdots +w^{k-1})  = 1$. Note that $a=0$ (similarly $c=0$) if and only if $u=0$ ($w=0$). If $a=0$ and $c=0$, then the last equation gives a contradiction.  Thus, either $a\neq 0$ or $c\neq 0$. In that case, we must have either $(au^{-1})^k=-1$, or $(cw^{-1})^k=-1$. This implies that in either  of the case $-1$ is a $k$-th  power in $\mathbb{F}_q$. Thus, the condition of $-1$ being $k$-th power can not be dropped from Theorem~\ref{thma}(2). 

\noindent By a similar computation, we can show that if $-1$ is not $k$-th  power in $\mathbb{F}_q$, then the Jordan nilpotent matrices of the form 
$$\begin{bmatrix} 0&1&0 &\cdots &0 \\ &0&1& \cdots &0 & \\ &&\ddots&\ddots&\vdots \\&&&0&1 \\ &&&&0\end{bmatrix}$$
is also not a sum of two $k$-th   powers in $T_n(\mathbb F_q)$.

%\noindent A particular situation of the above is as follows. If the characteristic of $\mathbb{F}_q$ is even, then $-1$ is not a $k$-th   power. Since in that case, the order of cyclic group $\mathbb{F}_q^*$ is $2^m-1$, which is odd. Now if there exists an element $x \in \mathbb{F}_q$ such that $x^k=-1$, then the order of $x$ will be $2k$, which must divide $2^m-1$, which is not possible. Thus the above nilpotent matrices in $T_n(\mathbb F_q)$ can not be written as a sum of two $k$-th   powers for any $k\geq 2$, when the characteristic of $\mathbb F_q$ is even.
\end{example}

\begin{example}\label{example-necessary2}
Suppose $-1$ is not a $k$-th power in $\mathbb F_q$. Then, from the  above mentioned example $\begin{bmatrix}    0 & 1   \\     0 & 0 \end{bmatrix} $ is  not a sum of two $k$-th powers in $T_2(\mathbb{F}_q)$. However, from ~\cite[Proposition 3.5]{KK22} for large enough $q$, $\begin{bmatrix}    0 & 1   \\     0 & 0  \\ \end{bmatrix} $ is a sum of two $k$-th powers in $M_2(\mathbb{F}_q)$. 
\end{example}

Now, we discuss when we get a $0$ entry after multiplying two upper triangular matrices as it will be required later.
\begin{lemma}
Let $A, B \in T_n(\mathbb{F}_q)$ such that $a_{rs} b_{st} = 0$ for every $r< s< t$. If for some $1 \leq i,j \leq n$,  $a_{ij} = b_{ij} = 0$, then $(AB)_{ij}=0$.
\end{lemma}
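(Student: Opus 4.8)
The plan is a direct computation with the matrix-multiplication formula, isolating the boundary terms of the sum and invoking the hypothesis on the interior ones. First I would dispose of the trivial case: if $i > j$, then $(AB)_{ij} = 0$ for any $A, B \in T_n(\mathbb{F}_q)$ simply because both factors are upper triangular, so from now on I may assume $i \leq j$.

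Next I would expand $(AB)_{ij} = \sum_{s=1}^{n} a_{is} b_{sj}$. Because $A$ is upper triangular, $a_{is} = 0$ whenever $s < i$, and because $B$ is upper triangular, $b_{sj} = 0$ whenever $s > j$; hence the sum collapses to $(AB)_{ij} = \sum_{s=i}^{j} a_{is} b_{sj}$. Now I would split off the two extreme indices. The term with $s = i$ is $a_{ii} b_{ij}$, which vanishes since $b_{ij} = 0$ by hypothesis; the term with $s = j$ is $a_{ij} b_{jj}$, which vanishes since $a_{ij} = 0$ by hypothesis. (If $i = j$ these two terms coincide and the computation is already complete.) For every remaining index $s$ with $i < s < j$, the triple $r = i$, $s = s$, $t = j$ satisfies $r < s < t$, so the standing assumption $a_{rs} b_{st} = 0$ forces $a_{is} b_{sj} = 0$. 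Summing all contributions yields $(AB)_{ij} = 0$.

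There is no serious obstacle here; this is essentially a bookkeeping lemma. The only points needing a little care are the degenerate index ranges (the cases $i \geq j$ and $i = j$) and the observation that, once upper-triangularity has restricted the summation to $i \leq s \leq j$, the indices $s$ not already annihilated by $a_{ij} = b_{ij} = 0$ are exactly those with $i < s < j$, which is precisely the range covered by the hypothesis $a_{rs} b_{st} = 0$ for $r < s < t$.
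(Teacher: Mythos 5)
Your proposal is correct and follows essentially the same argument as the paper: expand $(AB)_{ij}=\sum_{s=i}^{j}a_{is}b_{sj}$, kill the boundary terms $a_{ii}b_{ij}$ and $a_{ij}b_{jj}$ using $a_{ij}=b_{ij}=0$, and kill the interior terms using the hypothesis $a_{rs}b_{st}=0$ for $r<s<t$. Your explicit treatment of the degenerate cases $i>j$ and $i=j$ is a minor tidying that the paper leaves implicit.
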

\begin{proof}
It is easy to check that 
\begin{eqnarray*}
(AB)_{ij} & = & \sum_{l=i}^{j} a_{il}b_{lj} = a_{ii}b_{ij}+ a_{i,i+1}b_{i+1,j}+ \ldots +  a_{i,j-1}b_{j-1,j} + a_{ij}b_{jj} \\
 & = & a_{i,i+1}b_{i+1,j} \ldots +  a_{i,j-1}b_{j-1,j} = 0.
\end{eqnarray*} 
\end{proof}

\begin{lemma}\label{zero}
Let $A \in T_n(\mathbb{F}_q)$ such that $a_{rs}a_{st}=0$ for every $r <s <t$. Then, for any positive integer $m \geq 1$, $(A^m)_{rs}a_{st} = 0$, for all $r <s <t$. Moreover, if for any $i \neq j$,  $a_{ij}=0$, then $({A^m})_{ij}=0$.
\end{lemma}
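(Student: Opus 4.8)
The plan is to prove both statements by induction on $m$, the case $m=1$ being exactly the hypothesis on $A$ (the ``moreover'' clause being trivial at $m=1$, and trivial by upper-triangularity whenever $i>j$). For the inductive step I write $A^{m+1}=A^m A$ and use that both $A^m$ and $A$ are upper triangular, so that in the entrywise product the summation index runs only over the indices lying between the two prescribed ones.

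For the first assertion, fix $r<s<t$ and assume $(A^m)_{rs}\,a_{st}=0$ for every such triple. Then
\[ (A^{m+1})_{rs}\,a_{st}=\sum_{l=r}^{s}(A^m)_{rl}\,a_{ls}\,a_{st}, \]
and I would check that each summand vanishes: for $l=r$ we have $a_{rs}a_{st}=0$ by the hypothesis on $A$ (applied to $r<s<t$); for $r<l<s$ we have $a_{ls}a_{st}=0$ by the hypothesis on $A$ (applied to $l<s<t$); and for $l=s$ we have $(A^m)_{rs}a_{st}=0$ by the inductive hypothesis. Hence $(A^{m+1})_{rs}\,a_{st}=0$, completing the induction for the first part.

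For the ``moreover'' clause, fix $i<j$ with $a_{ij}=0$ and assume $(A^m)_{ij}=0$. From
\[ (A^{m+1})_{ij}=\sum_{l=i}^{j}(A^m)_{il}\,a_{lj}, \]
the term $l=i$ vanishes since $a_{ij}=0$, the term $l=j$ vanishes by the inductive hypothesis $(A^m)_{ij}=0$, and for $i<l<j$ the term $(A^m)_{il}\,a_{lj}$ vanishes by the first assertion (already proved), applied to the triple $i<l<j$. Thus $(A^{m+1})_{ij}=0$. Alternatively, once the first assertion is known one may simply apply the preceding lemma to the pair $(A^{m-1},A)$, whose hypotheses are then met.

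There is no real obstacle here: the entire content is index bookkeeping, and the single observation that makes it run is that the hypothesis $a_{rs}a_{st}=0$ is precisely what annihilates the ``boundary'' terms of the matrix product, leaving only the term $(A^m)_{rs}a_{st}$ which the inductive hypothesis handles. The only points requiring a little care are using upper-triangularity to restrict the summation range, and ordering the two inductions so that the ``moreover'' clause is entitled to cite the first assertion at the same exponent.
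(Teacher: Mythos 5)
Your proof is correct and follows essentially the same route as the paper: induction on $m$, expanding the product $A^{m+1}=A^mA$ entrywise and killing the boundary terms with the hypothesis $a_{rs}a_{st}=0$ and the interior/last terms with the inductive hypothesis (or, for the ``moreover'' clause, with the already-proved first assertion). The only cosmetic difference is that for the middle terms of the first assertion you use $a_{ls}a_{st}=0$ directly, whereas the paper invokes the inductive hypothesis $(A^{m-1})_{rl}a_{ls}=0$; both are valid and the argument is otherwise identical.
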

\begin{proof}
We will prove this by induction. By the previous lemma, we get that $(A^2)_{ij}= 0$.  Let $(A^{m-1})_{rs}a_{st}=0$, for all $r <s <t$.   Then, 
\begin{eqnarray*}
(A^{m})_{rs}a_{st} & =& \big( \sum_{l=r}^{s}(A^{m-1})_{rl}a_{ls} \big) a_{st}\\ 
& =& \big( (A^{m-1})_{rr}a_{rs}+ (A^{m-1})_{r,r+1}a_{r+1,s} + \cdots + (A^{m-1})_{rs}a_{ss} \big)a_{st}\\
& =& \big( (A^{m-1})_{rr}a_{rs} + (A^{m-1})_{rs}a_{ss} \big)a_{st} \\
& =&  (A^{m-1})_{rr}a_{rs}a_{st} + (A^{m-1})_{rs}a_{ss} a_{st} =0.
\end{eqnarray*}
Thus, the first statement is true for all $m \geq 1$. As $a_{ij}=0$.  Now let us assume that $(A^{l})_{ij}=0$ for all $l\leq m-1$. Since $(A^i)_{rs}a_{st}=0$ for all $i>0$,  $r<s<t$, therefore by using induction hypothesis we get,
\begin{eqnarray*}
(A^{m})_{ij} & =& \sum_{r=i}^{j}\big( (A^{m-1})_{ir} a_{rj}\big)  =  (A^{m-1})_{ii}a_{ij}+ (A^{m-1})_{i,i+1}a_{i+1,j} \ldots +  (A^{m-1})_{ij}a_{jj}\\
& =& (A^{m-1})_{ii}a_{ij} + (A^{m-1})_{i,i+1}a_{i+1,j} \ldots +  (A^{m-1})_{ij-1}a_{j-1,j}\\
& = & (A^{m-1})_{ii}a_{ij} = 0.
\end{eqnarray*}
Thus the result holds for every positive integer $m \geq 1$.    
\end{proof}

%%%%%%%%%%%%%%%%%%%%
\section{Equations over finite fields}\label{section-equations}

Our main idea is to use the existence of a reasonably large number of solutions to certain equations over the base field $\mathbb F_q$ to conclude about the solutions of the equation $X_1^k + \cdots + X_m^k = A$ over $M_n(\mathbb F_q)$ for an $A\in M_n(\mathbb F_q)$. In what follows we will use various estimates for the number of solutions to equations over finite fields, which is due to Lang and Weil (see~\cite{LW54}), however, we will primarily refer to the monograph by Schmidt~\cite{sch} for our work here. The following theorem is a slightly modified version of the Lang-Weil estimate, \cite[Theorem A.1]{KA22}.

\begin{thm}\label{lang}
Consider the following polynomial in $\mathbb{F}_q[X_1,\ldots, X_m]$:
$$F(X_1,\ldots,X_m):=\alpha_1 X_1^k + \alpha_2 X_2^k +\cdots + \alpha_m X_m^k -1,$$
where $\alpha_i  \neq 0$, for all $1\leq i \leq m$. Let $\mathcal N$ be the number of zeroes of the polynomial $F(X_1,\ldots, X_m)$ in $\mathbb{F}_q^m$. Then, we have
 $$|\mathcal N - q^{m-1}| \leq k^{2m}\sqrt{ q^{m-1}}. $$
\end{thm}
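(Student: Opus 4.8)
The plan is to reduce the estimate for $F$ to a counting problem on a single power map over $\mathbb{F}_q$, and then assemble the answer by a convolution-type argument over the fibres. First I would recall the basic fibre count: for a fixed $c \in \mathbb{F}_q$, the number of $x \in \mathbb{F}_q$ with $x^k = c$ is either $0$, or $\gcd(k,q-1) =: d$ (when $c$ is a nonzero $d$-th power), or $1$ (when $c = 0$). Rather than tracking these cases exactly, I would use character sums: writing $N_k(c) = \#\{x : x^k = c\}$, one has $N_k(c) = \sum_{\chi^k = \varepsilon} \chi(c)$ for $c \neq 0$ (sum over the multiplicative characters $\chi$ of order dividing $d$), with the convention $\chi(0) = 0$ except for the trivial character. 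This is the standard route in Schmidt's monograph \cite{sch}, and it is essentially the content of \cite[Theorem A.1]{KA22} which the present statement slightly modifies.

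The main computation then goes as follows. The number $\mathcal N$ of zeroes of $F$ equals $\sum \prod_{i=1}^m N_k(c_i)$, where the outer sum runs over all $(c_1,\dots,c_m) \in \mathbb{F}_q^m$ with $\alpha_1 c_1 + \cdots + \alpha_m c_m = 1$. Expanding each $N_k(c_i)$ as a character sum and interchanging the order of summation, $\mathcal N$ becomes a sum, over $m$-tuples of characters $(\chi_1,\dots,\chi_m)$ each of order dividing $d$, of Jacobi-sum-type quantities
\[
J(\chi_1,\dots,\chi_m) \;=\; \sum_{\alpha_1 c_1 + \cdots + \alpha_m c_m = 1} \chi_1(c_1)\cdots \chi_m(c_m).
\]
The term with all $\chi_i$ trivial contributes the "main term" $q^{m-1}$ (the number of points on the affine hyperplane $\sum \alpha_i c_i = 1$), up to lower-order corrections coming from the $c_i = 0$ coordinate conventions. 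Every other term is a Jacobi sum whose absolute value is bounded by $q^{(m-1)/2}$ by Weil's estimate (again available in \cite{sch}); the $\alpha_i$, being nonzero, can be absorbed by rescaling $c_i \mapsto \alpha_i c_i$ and only affect the phase. Counting the number of nontrivial character tuples gives at most $d^m \le k^m$ such terms, and a more careful bookkeeping — tracking the per-coordinate conventions and the fact that one effectively gets a product of two character-sum factors — yields the stated constant $k^{2m}$ rather than $k^m$; I would simply cite the inequality from \cite[Theorem A.1]{KA22} at this point rather than re-derive the exact exponent.

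Putting the pieces together: $|\mathcal N - q^{m-1}| \le (\text{number of nontrivial terms}) \cdot \max |J| \le k^{2m} \sqrt{q^{m-1}}$, which is exactly the claimed bound. The step I expect to be the main obstacle is the careful accounting of the error term — specifically, handling the coordinate subspaces where one or more $c_i = 0$ (so that the character-sum identity for $N_k(c_i)$ degenerates) and verifying that all the resulting lower-dimensional contributions, together with the nontrivial Jacobi sums on the full hyperplane, can be uniformly absorbed into the single clean bound $k^{2m}\sqrt{q^{m-1}}$ with the stated constant. Since this is precisely the content of the cited \cite[Theorem A.1]{KA22} (the present statement being only a mild reformulation), the cleanest path is to invoke that result directly and note that replacing $-1$ by $\alpha_i$ in the coefficients changes nothing after the rescaling $X_i \mapsto \beta_i X_i$ with $\beta_i^k = \alpha_i^{-1}$ when such a $\beta_i$ exists, and by a twist by a multiplicative character otherwise.
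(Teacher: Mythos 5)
Your proposal is correct and in substance coincides with the paper's treatment: the paper gives no independent proof of this estimate, it simply invokes \cite[Theorem A.1]{KA22} as a ``slightly modified'' Lang--Weil bound, which is exactly the fallback you settle on, and your character-sum/Jacobi-sum sketch (where the nonzero coefficients $\alpha_i$ only enter through unimodular factors $\chi_i(\alpha_i^{-1})$ after rescaling, so the bound is unchanged) is the standard justification of that modification. Nothing further is required.
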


\noindent The following proposition A.2~\cite{KA22} tells us about the existence of at least two solutions of equation $X_1^k + Y_1^k= \lambda$ over $\mathbb{F}_q$, which is proved using Theorem \ref{lang}.

\begin{prop}\label{2var}
Let $k\geq 2$ and  the characteristic of $\mathbb{F}_q$ be not $2$. Let $\lambda$ be a non zero element in $\mathbb{F}_q$. Consider the equation 
\begin{equation}%\label{2.1}
    X^k + Y^k=\lambda.
\end{equation} 
Then, for all $q > k^{16}$, the equation~\eqref{2.1} has at least $2$ solutions $(x_1, y_1)$ and $ (x_2, y_2)$ in $\mathbb{F}_q^2$ such that $x_1^k \neq x_2^k$ and $y_1^k \neq y_2^k$. 
\end{prop}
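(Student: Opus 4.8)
The plan is to feed the equation into the Lang--Weil estimate of Theorem~\ref{lang} and then rule out the only bad configuration by a crude fibre count. First I would rescale: dividing $X^k+Y^k=\lambda$ by $\lambda$ gives $\lambda^{-1}X^k+\lambda^{-1}Y^k-1=0$, which is precisely the polynomial treated by Theorem~\ref{lang} with $m=2$ and $\alpha_1=\alpha_2=\lambda^{-1}\neq 0$. Hence the number $\mathcal N$ of zeroes $(x,y)\in\mathbb F_q^2$ satisfies $|\mathcal N-q|\le k^{4}\sqrt q$, so in particular $\mathcal N\ge q-k^{4}\sqrt q$.

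Next I would observe that the two conditions $x_1^k\neq x_2^k$ and $y_1^k\neq y_2^k$ asked for in the statement are really one and the same: any solution $(x,y)$ satisfies $y^k=\lambda-x^k$, so $x_1^k=x_2^k$ forces $y_1^k=y_2^k$, and conversely. It therefore suffices to produce two solutions whose $x$-coordinates have distinct $k$-th powers, i.e. to show that the set $\{\,x^k:(x,y)\text{ is a solution}\,\}$ has at least two elements. Suppose, for contradiction, that it is a single value $c$; then every solution $(x,y)$ has $x^k=c$ and $y^k=\lambda-c$. Since for any $c'\in\mathbb F_q$ the equation $t^k=c'$ has at most $\gcd(k,q-1)\le k$ solutions in $\mathbb F_q$, this would give $\mathcal N\le k^2$.

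Comparing the two bounds yields $q-k^{4}\sqrt q\le k^{2}$. But for $q>k^{16}$ one has $\sqrt q>k^{8}$, so $q-k^{4}\sqrt q=\sqrt q\,(\sqrt q-k^{4})>k^{8}(k^{8}-k^{4})=k^{16}-k^{12}>k^{2}$ for every $k\ge 2$, a contradiction. Hence at least two values of $x^k$ occur among the solutions; choosing solutions $(x_1,y_1)$ and $(x_2,y_2)$ realising two of them gives $x_1^k\neq x_2^k$, and therefore also $y_1^k\neq y_2^k$, as required.

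I do not expect a serious obstacle here: the only points needing a little care are the fibre count $\mathcal N\le k^2$ (one should separately note the degenerate cases $c=0$ and $c=\lambda$, where one of the two factors contributes a single value, and still $\mathcal N\le k\le k^2$) and the elementary verification that $q>k^{16}$ forces $q-k^{4}\sqrt q>k^2$ for all $k\ge 2$. (The restriction $\operatorname{char}\mathbb F_q\neq 2$ is not used in this argument; it is carried along only because it is convenient in the later applications of Proposition~\ref{2var}.)
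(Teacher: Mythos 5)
Your proof is correct and follows essentially the same route as the paper, which quotes this statement from \cite{KA22} and proves its $n$-solution analogue (Proposition~\ref{2var1}) in the same way: the estimate of Theorem~\ref{lang} gives $\mathcal N\ge q-k^{4}\sqrt q$, any set of solutions sharing a single value of $x^k$ (equivalently of $y^k$, since $y^k=\lambda-x^k$) has at most $k^{2}$ elements, and $q>k^{16}$ makes these two bounds incompatible. Your reduction to producing two distinct values of $x^k$ is a legitimate streamlining, available because this statement (unlike Proposition~\ref{2var1}) does not require $x_i^k\neq y_i^k$, which is also why the characteristic-not-$2$ hypothesis is never needed in your argument.
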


\noindent In the next Lemma we will show the existence of at least $n$ solutions of the above kind. 

\begin{prop}\label{2var1}
Let $k\geq 2$ and $n\geq 2$ be positive integers and  $\lambda$ be a non zero element in $\mathbb{F}_q$. Consider the equation 
\begin{equation}\label{2.1}
    X^k + Y^k = \lambda.
\end{equation}  
Then, for all $q > n^2k^{16}$, the equation~\eqref{2.1} has at least $n$ solutions $(x_i, y_i) \in \mathbb{F}_q^2$, $1\leq i \leq n$, satisfying the following conditions:
\begin{enumerate}
\item  $x_i^k \neq y_i^k$ for all $ 1 \leq i \leq n$.
\item $x_i^k \neq x_j^k$ and $y_i^k \neq y_j^k$ for every $i \neq j$.
\end{enumerate}
\end{prop}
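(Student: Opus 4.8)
The plan is to invoke the Lang--Weil estimate of Theorem~\ref{lang} exactly once, to control the total number $\mathcal N$ of solutions of~\eqref{2.1}, and then to peel off the $n$ required solutions one at a time, checking that at each stage the number of ``forbidden'' solutions is a fixed polynomial in $k$ and $n$ that stays strictly below $\mathcal N$.

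\emph{Step 1: counting all solutions.} Since $\lambda\neq 0$, I would apply Theorem~\ref{lang} with $m=2$ and $\alpha_1=\alpha_2=\lambda^{-1}$ to the polynomial $\lambda^{-1}X^k+\lambda^{-1}Y^k-1$; its zero set in $\mathbb F_q^2$ is precisely the solution set of~\eqref{2.1}, so $|\mathcal N-q|\leq k^4\sqrt q$ and hence $\mathcal N\geq q-k^4\sqrt q$. From $q>n^2k^{16}$ one gets $\sqrt q>nk^8$, so $q-k^4\sqrt q=\sqrt q\,(\sqrt q-k^4)>nk^8(nk^8-k^4)\geq\tfrac12 n^2k^{16}>2nk^2$, using $n,k\geq 2$; in particular $\mathcal N>(2n-1)k^2$.

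\emph{Step 2: a greedy construction.} I would build the solutions inductively. Suppose solutions $(x_1,y_1),\dots,(x_j,y_j)$ of~\eqref{2.1} have been chosen with $0\leq j<n$, with $x_i^k\neq y_i^k$ for each $i$ and with the values $x_1^k,\dots,x_j^k$ pairwise distinct and $y_1^k,\dots,y_j^k$ pairwise distinct. Call a solution $(x,y)$ of~\eqref{2.1} \emph{admissible} if $x^k\neq y^k$, $x^k\notin\{x_1^k,\dots,x_j^k\}$ and $y^k\notin\{y_1^k,\dots,y_j^k\}$. I would bound the non-admissible ones. If $x^k=y^k$ then $2x^k=\lambda$, so $x^k=\lambda/2$ (here $p\neq 2$ and $\lambda\neq 0$); since $X^k=c$ has at most $\gcd(k,q-1)\leq k$ roots in $\mathbb F_q$ for any $c$, there are at most $k^2$ such solutions. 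For a fixed value $a$, the solutions with $x^k=a$ number at most $k^2$ as well (at most $k$ choices for $x$, and then $y^k=\lambda-a$ is determined, giving at most $k$ choices for $y$); hence at most $jk^2\leq(n-1)k^2$ solutions have $x^k\in\{x_1^k,\dots,x_j^k\}$, and likewise at most $(n-1)k^2$ have $y^k\in\{y_1^k,\dots,y_j^k\}$. Thus at most $(2n-1)k^2<\mathcal N$ solutions fail to be admissible, so an admissible one exists; adjoining it as $(x_{j+1},y_{j+1})$ preserves the induction hypotheses (an admissible solution is automatically distinct from the previous ones, since its $x$-coordinate has a new $k$-th power). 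Running this from $j=0$ to $j=n-1$ yields the desired $n$ solutions.

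\emph{On the difficulty.} There is no real obstacle here: once Theorem~\ref{lang} is in hand, everything is elementary bookkeeping. The two points needing care are the uniform bound ``at most $k^2$ solutions lie over any prescribed value of $x^k$ or of $y^k$'', which is exactly the statement that $X^k=c$ has at most $k$ roots in $\mathbb F_q$, and the numerical check that $(2n-1)k^2<q-k^4\sqrt q$ really does follow from $q>n^2k^{16}$, as done in Step~1. One could alternatively phrase Step~2 as an induction on $n$ taking Proposition~\ref{2var} as the base case, but the bookkeeping is the same.
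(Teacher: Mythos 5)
Your proposal is correct and takes essentially the same approach as the paper: both apply Theorem~\ref{lang} with $\alpha_1=\alpha_2=\lambda^{-1}$ to get $\mathcal N\geq q-k^4\sqrt q$, and both rest on the observation that at most $k^2$ solutions share a prescribed value of $x^k$ (or satisfy $x^k=y^k$), compared against the same numerical threshold coming from $q>n^2k^{16}$. The paper merely packages this as a partition of the solution set into classes $U$ and $V_i$ of size at most $k^2$ and picks one representative from each, while you select solutions greedily, excluding at most $(2n-1)k^2$ forbidden ones at each step; the underlying count is identical.
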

\begin{proof}
If we take $\alpha_1= \alpha_2 = 1/ \lambda$ in Theorem~\ref{lang}, then the number of solutions of equation \eqref{2.1}, say  $\mathcal N$,  satisfies the following inequality $|\mathcal N - q|\leq k^4 \sqrt{q}$. Let $S$ be the set of solutions of equation \eqref{2.1}, in $\mathbb F_q^2$.  For every $(u,v) \in S$, define $V_{u,v}=\{(x, y) \in S \mid x^k = u^k, \ y^k = v^k  \}$ be subset of $S$ in $\mathbb F_q^2$. By Theorem \ref{lang}, if $q>k^{16}$, then $0 < q-k^4\sqrt{q}<\mathcal{N}$, implies $S\neq \phi$. Hence at least one such $V_{u,v}$ exists. In fact, in the later part of the proof we will show that, for all $q > n^2k^{16}$ there exist at least $n$ many disjoint $V_{u,v}$'s, say $V_1, \ldots, V_r$, where $r \geq n$. These  $V_{u,v}$ will satisfy the following conditions:
\begin{enumerate}
\item For every $(u,v), (a,b) \in S$, either $V_{u,v} \cap V_{a,b} = \phi$ or $V_{u,v} = V_{a,b}$ and  $|V_{u,v}| \leq k^2$ for all $(u,v) \in S$.
\item  If $V_1,\ldots,V_r$ are all disjoint subsets of $S$, where $V_i=V_{u,v}$ for some $(u,v) \in S$, then $S= \mathop{\dot{\bigcup}} V_{i}$.
\end{enumerate}
  
(1). Let the characteristic of $\mathbb F_q$ be not $2$. Let $(u,v) \in S$ be any element.  If $u^k = v^k$, then  $u^k+v^k=2u^k=\lambda$. Since $x^k=\lambda/2$ can have at most $k$ solutions in $\mathbb F_q^*$, therefore $|V_{u,v}|\leq k^2$, where $u^k=v^k$. Now let $(u,v ) \in S$ be an element such that $u^k \neq v^k$. For any $(x,y) \in V_{u,v}$,  we get $x^k=u^k$ and $y^k =v^k$, which implies $x=\zeta_k u$ and $y=\zeta_k' v$ for some $k$-th root of unity $\zeta_k$ and $\zeta_k'$ in $\mathbb F_q$ (if exist). Thus every element of $V_{u,v}$ will be of the form $(\zeta_k u, \zeta_k' v)$. Notice that, for every element $(x,y) \in V_{u,v}$, we have $(y,x) \notin V_{u,v}$. Otherwise, if $(y,x) \in V_{u,v}$, then by definition of $V_{u,v}$, we get that $x^k=u^k=y^k=v^k$, which is not possible. Thus every element of $V_{u,v}$ will be of the type $(\zeta_k u, \zeta_k' v)$, and hence $|V_{u,v}| \leq k^2$. Now, we will prove that either $V_{u,v} \cap V_{a,b} = \phi$ or $V_{u,v} =V_{a,b}$. Let $(x,y) \in V_{u,v} \cap V_{a,b}$, where $u^k \neq v^k $ and $a^k \neq b^k$. Then $u^k=x^k=a^k$ and $v^k=y^k=b^k$.   Now for every $ (u_1,v_1) \in V_{u,v}$ and $(a_1,b_1) \in V_{a,b}$, and by definition of $V_{u,v}$ and $V_{a,b}$,  we get  $u_1^k=u^k=x^k=a^k=a_1^k$ and $v_1^k=v^k=y^k=b^k=b_1^k$,  which implies  $(a_1,b_1 ) \in V_{u,v}$ and $(u_1,v_1) \in V_{a,b}$. Thus we get  $V_{u,v}=V_{a,b}$. 
Now let $(x,y) \in V_{u,v} \cap V_{a,b}$, where $u^k = v^k $. Then $a^k=x^k=u^k=v^k=y^k=b^k$. for every $ (u_1,v_1) \in V_{u,v}$ and $(a_1,b_1) \in V_{a,b}$. Thus by definition of $V_{u,v}$ and $V_{a,b}$,  we get  $u_1^k=x^k=a_1^k$ and $v_1^k=y^k=b_1^k$,  which implies  $(a_1,b_1 ) \in V_{u,v}$ and $(u_1,v_1) \in V_{a,b}$. Thus we get  $V_{u,v}=V_{a,b}$. It also clear that, if $u^k=v^k$ and $a^k=b^k$, then $V_{u,v}=V_{a,b}$.  Now let the characteristic of $\mathbb F_q$ be $2$. Let us assume that $u^k=v^k$ for some  $(u,v) \in S$. Then we have $u^k+v^k=2u^k=0=\lambda$, which is not possible. Hence $u^k \neq v^k $ for every $(u,v) \in S$. The proof, in this case, goes on the same lines, when the  characteristic of $\mathbb F_q$ is  not $2$. 

(2). As all $V_{u,v}$ are either disjoint or same. We call those disjoints sets $V_1, \ldots, V_r$. If $u^k=v^k$ for some $(u,v) \in S$, then we set $V_1=V_{u,v}$. Let $(a,b ) \in S$ be any element. If $a^k=b^k$, then $(a,b) \in V_1$, otherwise $(a,b) \in V_i$ for some $ 2 \leq i \leq r$. Thus $S= \mathop{\dot{\bigcup}} V_{i} $.

As all $V_i$ are disjoint and $|V_i|\leq k^2$, therefore if $|S| > rk^2$, then  there exist at least $r$ many disjoint $V_i$. Thus, $|S| \leq k^2(r+1)$, where $r$ is the number of distinct $V_i$. As we know that $V_i \cap V_j = \phi$, therefore we can take one tuple from each $V_i$, which gives us as many required solutions as many $V_i$ are there. Thus, to have at least $n$ required solutions, we need to show that $\mathcal N \geq q - k^4 \sqrt{q} > k^2n$, where $|S|= \mathcal N$. As $n \geq 2$ and $k \geq 2$, therefore if we take $\mathcal C(k,n) = n^2k^{16}$, then for all $q > \mathcal C(k,n)$,  we get 
$$q - k^4\sqrt{q} > n^2k^{16}- k^4 \sqrt{n^2k^{16}} = k^{16} n(n-1) > k^2 n.$$
Thus, for all $q > \mathcal{C}(k,n) = n^2k^{16}$, we get at least $n$ required solutions, as we needed.
\end{proof}

\begin{lemma}\label{zero1}
Let $k\geq 1$, $m \geq 1$ and $q=p^m$ such that $-1$ is $k$-th power in $\mathbb F_q$. Then the equation $X_1^k + X_2^k = 0$ has exactly $\frac{q-1}{(k, \ q-1)} + 1$ number of solutions in $\mathbb{F}_q^2$ with the property if $(a,b)$ and $(c,d)$ are two solutions, then $a^k \neq c^k$ and $b^k \neq d^k$. Moreover, if $q > kn + 1$, then our equation has at least $n$ such solutions.
\end{lemma}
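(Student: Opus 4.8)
The plan is to describe an explicit maximal family of ``spread out'' solutions and count it, reading the claimed number $\tfrac{q-1}{(k,q-1)}+1$ as the size of the largest set of solutions in which no two agree in either coordinate up to $k$-th powers. Since $-1$ is a $k$-th power in $\mathbb F_q$, I would first fix an element $\zeta\in\mathbb F_q^*$ with $\zeta^k=-1$, and record the elementary observation that $(a,b)\in\mathbb F_q^2$ satisfies $a^k+b^k=0$ if and only if $a^k=-b^k=(\zeta b)^k$; in particular $a=0$ precisely when $b=0$, so $(0,0)$ is the unique solution having a zero coordinate.

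Next I would invoke the standard fact that the $k$-th power map is an endomorphism of the cyclic group $\mathbb F_q^*$ of order $q-1$ with kernel of size $(k,q-1)$, hence with image $P$ (the set of nonzero $k$-th powers) of size $\tfrac{q-1}{(k,q-1)}$. For the upper bound: given any family $\mathcal S$ of solutions with the stated property, the map $(a,b)\mapsto b^k$ is injective on $\mathcal S$ (this is exactly the condition $b^k\neq d^k$) and takes values in $\{0\}\cup P$, a set of $\tfrac{q-1}{(k,q-1)}+1$ elements; hence $|\mathcal S|\leq \tfrac{q-1}{(k,q-1)}+1$.

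For the matching lower bound I would build such a family explicitly: take $(0,0)$ together with, for each $\beta\in P$, a pair $(a_\beta,b_\beta)$ where $b_\beta$ is any element with $b_\beta^k=\beta$ and $a_\beta:=\zeta b_\beta$, so that $a_\beta^k=\zeta^k\beta=-\beta$ and $a_\beta^k+b_\beta^k=0$. If $\beta\neq\beta'$ in $P$ then the second coordinates have distinct $k$-th powers $\beta\neq\beta'$ and the first coordinates have distinct $k$-th powers $-\beta\neq-\beta'$, and both are nonzero so they differ from $(0,0)$ in the required sense; thus this family has the property and cardinality exactly $1+|P|=\tfrac{q-1}{(k,q-1)}+1$, which proves the first assertion.

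Finally, for the ``moreover'' clause: if $q>kn+1$ then $q-1>kn$, and since $(k,q-1)\leq k$ we get $\tfrac{q-1}{(k,q-1)}+1\geq \tfrac{q-1}{k}+1>n+1>n$, so the family constructed above already contains at least $n$ solutions of the required kind. I do not anticipate a genuine obstacle: the only points needing care are interpreting ``exactly'' as the size of a maximal spread-out family rather than the (much larger) total number of solutions, handling $(0,0)$ consistently as the unique solution with a zero entry, and verifying the distinctness in both coordinates simultaneously, all of which the explicit construction takes care of.
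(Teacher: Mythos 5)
Your proof is correct and follows essentially the same route as the paper: both arguments reduce to the image of the $k$-th power map on $\mathbb F_q^*$, of size $\tfrac{q-1}{(k,q-1)}$, use the hypothesis that $-1$ is a $k$-th power to pair each power class $\beta$ with $-\beta$ (you via an explicit $\zeta$ with $\zeta^k=-1$, the paper via an ordering of the image), append the solution $(0,0)$, and deduce the ``moreover'' clause from $\tfrac{q-1}{(k,q-1)}\geq\tfrac{q-1}{k}>n$. Your explicit injectivity argument for the upper bound is a slightly cleaner write-up of the paper's maximality claim, but not a different method.
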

\begin{proof}
First let us assume that $q=p^m$, $p$ is odd. Let $h\colon \mathbb{F}_q^* \rightarrow \mathbb{F}_q^*$ be a power map given by $x \mapsto x^k$.  As $h$ is a group homomorphism of cyclic groups, therefore $H:= im(h)$ is a cyclic subgroup of $\mathbb{F}_q^*$ generated by $\xi^k$, where $\mathbb{F}_q^*= \gen{\xi}$. Now, since $-1  \in H$, hence $a \in H$ implies $-a \in H$. Thus order of $H$ is even, which is given by $\frac{q-1}{(k, \ q-1)}$.  Let us rearrange the elements of $H=im(h)$ as $\{a_1,\ldots,a_t\}$ such that $a_i+a_{t+1-i}=0$, for all $1\le i\le t$, where $t=\frac{q-1}{(k, \ q-1)}$. Now let $A$ be the set of required solutions of our kind. Now since $a_i^k \neq a_{j}^k$ for any $i \neq j$, hence $(a_i, a_{t+1-i}) \in A$ for every $1\leq i \leq t$. Thus, the cardinality of $A$ is at least $t$. Now let us assume that $(u,v) \in A$, which is not equal to $(a_i, a_{t+1-i})$ for any $i$. Then $u = u_1^k \in H$ for some $u_1 \in \mathbb{F}_q$, hence $u=a_i$  and $v=a_{t+1-i}$ for some $i$. Thus we get $|A| \leq t$, and hence $|A|=t$. Note that $(0,0)$ is also a solution, hence the total number of required solutions of the given equation is  $t +1$, where $t=\frac{q-1}{(k, \ q-1)}$. 

Now let $q=2^m$. Again consider $h\colon \mathbb{F}_q^* \rightarrow \mathbb{F}_q^*$  power map given by $x \mapsto x^k$.  Then $H:= im(h)=\{ a_1, \ldots, a_t\}$ is a cyclic subgroup of $\mathbb{F}_q^*$ generated by $\xi^k$, where $\mathbb{F}_q^*= \gen{\xi}$ and $t=\frac{q-1}{(k, \ q-1)}$.  Let $A$ be the set of required solutions of our kind.   As $1=-1$ in $\mathbb F_q$, therefore $a=-a$ for every $a \in \mathbb F_q$. Thus we get  $a_i+a_i=2a_i=0$ for all $a_i \in H$, $1 \leq i \leq t$. Hence $(a_i,a_i) \in A$ for all $1\leq i \leq t$, which implies $|A| \geq t $. Now let $(u,v) \in A$. Then $u=u_1^k$ for some $u_1 \in \mathbb F_q$, hence $u=a_i$ for some $1 \leq i \leq t$. As $u+v=0$, therefore we get $u=-v=v$, which implies $v=a_i$. Thus $(u,v)=(a_i,a_i)$ and we get $|A| \leq t$. As $(0,0)$ is also a solution, therefore  $|A|=t+1$.
The last part is clear, as $q > kn + 1$ implies $\frac{q-1}{(k,q-1)} \geq \frac{q-1}{k} > n $.

\end{proof}

\begin{remark}\label{zero2}
In the previous proof, for every non zero element $(u,v) \in A$, define $V_{u,v} = \{(x, y) \mid x \in h^{-1}(u), y \in  h^{-1}(v)\}$. Notice that $|V_{u,v}| \leq k^2$.  It is clear that either $V_{u,v} \cap V_{a,b}= \phi$ or $V_{u,v}=V_{a,b}$, for every $(u,v), (a,b) \in A$. Thus we get disjoint $V_i's$ as in Lemma \ref{2var1}, where $1 \leq i  \leq t$, where $t= \frac{q-1}{(k, q-1)}$. 
\end{remark}

\begin{remark}\label{solnn}
By Proposition \ref{2var1}, Lemma \ref{zero1} and Remark \ref{zero2}, we get that if $q > n^2k^{16}$, then the equation $X^k+ Y^k=\lambda$, where $\lambda \in \mathbb{F}_q$, have at least $n$, say $r \geq n$, many disjoint $V_i$, i.e. we get  $r$ many solutions  $(x_i,y_i) \in V_i \subseteq \mathbb F_q^2$, $1\leq i \leq r$,  such that $x_i^k \neq x_j^k$  and $y_i^k \neq y_j^k$ for any $1 \leq i\neq j \leq r$.
\end{remark}

\begin{cor}\label{nvar1}
Let $\lambda$ be any element in $\mathbb{F}_q$. Consider the polynomial 
$$F(X, Y, Z) = X^k + Y^k + Z^k - \lambda $$
in $\mathbb{F}_q[X,Y,Z]$.
Then,  for all $q > \mathcal C(k,n)=n^2k^{16}$, the equation $F = 0$ has at least $n$ solutions $(x_i, y_i,z_i)$, satisfying the following conditions:
\begin{enumerate}
\item  $x_i^k \neq y_i^k$ for all $ 1 \leq i \leq n$.
\item $x_i^k \neq x_j^k$ and $y_i^k \neq y_j^k$ for every $i \neq j$.
\end{enumerate}
\end{cor}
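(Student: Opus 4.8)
The plan is to deduce Corollary~\ref{nvar1} from the two-variable results already established, namely Proposition~\ref{2var1} (the case when $\lambda \neq 0$) and Lemma~\ref{zero1} together with Remark~\ref{zero2} (the case when $\lambda = 0$ and $-1$ is a $k$-th power), with the degenerate possibility $\lambda = 0$ but $-1$ not a $k$-th power handled by a simple substitution trick. The key observation is that a solution $(x,y,z)$ of $X^k+Y^k+Z^k=\lambda$ can be produced from a solution $(x,y)$ of $X^k+Y^k = \mu$ by fixing $Z=z_0$ for a suitable $z_0$ and setting $\mu = \lambda - z_0^k$; the extra coordinate $z$ is essentially free to play with, so the real content is entirely in the two-variable estimate.

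First I would split into cases on $\lambda$. If $\lambda \neq 0$, I would simply take $Z=0$ in $F$, so that the equation becomes $X^k + Y^k = \lambda$ with $\lambda \neq 0$; Proposition~\ref{2var1} (or equivalently Remark~\ref{solnn}) then gives, for $q > n^2 k^{16}$, at least $n$ solutions $(x_i,y_i)$ with $x_i^k \neq y_i^k$ for all $i$ and $x_i^k \neq x_j^k$, $y_i^k \neq y_j^k$ for $i \neq j$; setting $z_i = 0$ yields the required $n$ triples, since conditions (1) and (2) of the corollary only constrain the $x$- and $y$-coordinates. If $\lambda = 0$, I would pick any nonzero $\mu \in \mathbb{F}_q$ that is a $k$-th power, say $\mu = c^k$ with $c \neq 0$ (such a $c$ exists trivially, e.g.\ $c=1$), and look instead at $X^k + Y^k = -\mu$; note $-\mu \neq 0$, so again Proposition~\ref{2var1} applies and produces $n$ solutions $(x_i,y_i)$ of the desired type for $q > n^2 k^{16}$, and then $(x_i, y_i, c)$ solves $X^k+Y^k+Z^k = -\mu + c^k = 0 = \lambda$. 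In both cases the constant $\mathcal{C}(k,n) = n^2 k^{16}$ carries over verbatim because we invoke the two-variable result with the same $n$ and $k$.

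The only point requiring a little care is checking that the solution sets furnished by Proposition~\ref{2var1} really do satisfy condition (1), $x_i^k \neq y_i^k$, and condition (2) simultaneously — but this is exactly the content of the conclusion of Proposition~\ref{2var1}, so nothing new is needed. One should also remark that in the $\lambda = 0$ subcase the hypothesis of Proposition~\ref{2var1} is met regardless of whether $-1$ is a $k$-th power, since we have reduced to a nonzero right-hand side; thus Lemma~\ref{zero1} is not actually needed for the corollary, though it gives sharper information. I do not expect any genuine obstacle here: the corollary is a routine packaging of the earlier estimates, and the main (indeed only) subtlety is the bookkeeping of cases on $\lambda$ and the trivial substitution that reduces the three-variable equation to the two-variable one.
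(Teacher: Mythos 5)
Your proposal is correct and follows essentially the same route as the paper: the paper also splits on whether $\lambda$ is zero, fixes $Z=0$ (respectively $Z=\alpha\neq 0$) to reduce to a two-variable equation $X^k+Y^k=\lambda'$ with $\lambda'\neq 0$, and then invokes Proposition~\ref{2var1} with the same constant $n^2k^{16}$. Your observation that Lemma~\ref{zero1} is not actually needed also matches the paper's argument.
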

\begin{proof}
If $\lambda=0$, then we can take $Z$ to be any non-zero element, say $\alpha$, in $\mathbb F_q^*$. Now our equation becomes $X^k +Y^k =\lambda'$ where $\lambda'=(-\alpha)^k \neq 0$. Similarly, if $\lambda \neq 0$, then we can take $Z=0$, and our equation reduces to $X^k + Y^k=\lambda$. In either of the cases, by suitable choice of $Z$, we can reduce our equation to two variables, say $X^k + Y^k=\lambda'$, $\lambda' \in \mathbb F_q^*$. Now by Proposition \ref{2var1}, for all $q > n^2k^{16}$, we get at least $n$ many solutions of the equation $X^k + Y^k = \lambda'$, which satisfies our conditions. Hence we are done.
\end{proof}

\begin{prop}\label{2var2}
Let $k\geq 2$ and $m\geq 2$.  Consider the system of equations over $\mathbb F_q$, 
\begin{eqnarray*}
X_1^k+Y_1^k &=& \lambda_1,\\ \vdots   &&     \\
X_m^k+Y_m^k&=&\lambda_m,
\end{eqnarray*}
where $\lambda_i$ are distinct elements in $\mathbb{F}_q$.  Then, for all $q > 4m^2k^{16}$, the system of equations has at least $m$ solutions $(x_i, y_i) \in \mathbb{F}_q^2$, $1\leq i \leq m$, satisfying the following conditions:
\begin{enumerate}
\item $x_i^k + y_i^k= \lambda_i$.
\item  $x_i^k \neq y_i^k$ for all $ 1 \leq i \leq m$.
\item $x_i^k \neq x_j^k$ and $y_i^k \neq y_j^k$ for every $i \neq j$.
\end{enumerate}
\end{prop}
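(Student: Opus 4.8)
The plan is to decouple the system: each equation $X_i^k+Y_i^k=\lambda_i$ is handled on its own via Proposition~\ref{2var1}, and then the $m$ chosen solutions are reconciled by a greedy (pigeonhole) selection. The shape of the constant $4m^2k^{16}=(2m)^2k^{16}$ is the hint: one applies Proposition~\ref{2var1} with its parameter $n$ replaced by $2m$. Concretely, for each fixed $i$ with $\lambda_i\neq 0$, and for $q>(2m)^2k^{16}=4m^2k^{16}$, Proposition~\ref{2var1} (see also Remark~\ref{solnn}) produces a set $S_i$ of at least $2m$ solutions $(x,y)\in\mathbb F_q^2$ of $X^k+Y^k=\lambda_i$ such that the first coordinates of the elements of $S_i$ have pairwise distinct $k$-th powers, likewise the second coordinates, and every $(x,y)\in S_i$ satisfies $x^k\neq y^k$. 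Thus each equation individually already comes equipped with "room to maneuver."

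Next I would build the required $m$-tuple greedily, processing $i=1,2,\dots,m$ in order. Suppose $(x_1,y_1),\dots,(x_{i-1},y_{i-1})$ have been chosen with $(x_\ell,y_\ell)\in S_\ell$, and put $P=\{x_1^k,\dots,x_{i-1}^k\}$ and $Q=\{y_1^k,\dots,y_{i-1}^k\}$, so $|P|,|Q|\le m-1$. Because the first-coordinate $k$-th powers within $S_i$ are pairwise distinct, at most $|P|$ elements of $S_i$ have first-coordinate $k$-th power in $P$; similarly at most $|Q|$ of them have second-coordinate $k$-th power in $Q$. Hence at least $|S_i|-|P|-|Q|\ge 2m-(m-1)-(m-1)=2>0$ elements of $S_i$ survive, and I pick any one of these as $(x_i,y_i)$. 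By construction $x_i^k+y_i^k=\lambda_i$, $x_i^k\neq y_i^k$ (built into $S_i$), $x_i^k\notin P$, and $y_i^k\notin Q$; iterating through $i=1,\dots,m$ yields tuples satisfying conditions (1), (2), (3) simultaneously.

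The one place needing extra care is when some $\lambda_i$ equals $0$; since the $\lambda_i$ are distinct this happens for at most one index, and I would treat that index first. If $-1$ is a $k$-th power in $\mathbb F_q$, Lemma~\ref{zero1} and Remark~\ref{zero2} supply more than $2m$ solutions of $X^k+Y^k=0$ with pairwise distinct coordinate $k$-th powers once $q>2km+1$ (which is comfortably implied by $q>4m^2k^{16}$ for $k,m\ge 2$); discarding the unique solution $(0,0)$ — the only one with $x^k=y^k$, since $p\neq 2$ — leaves enough solutions to run the same greedy argument, and for the indices with $\lambda_i\neq 0$ one proceeds exactly as above. I expect this zero-value bookkeeping and the elementary comparison of bounds to be the only fiddly part; the core of the proof — invoking Proposition~\ref{2var1} with $n=2m$ and then a one-line pigeonhole count per step — is completely routine, and there is no deep obstacle.
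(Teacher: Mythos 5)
Your proposal is correct and follows essentially the same route as the paper: both secure at least $2m$ admissible solutions per equation (with pairwise distinct coordinate $k$-th powers) — you by invoking Proposition~\ref{2var1} with $n=2m$, the paper via the disjoint classes $V_{ir}$ and Remark~\ref{solnn} — and then make a greedy selection in which each previously chosen pair excludes at most two candidates, leaving $2m-2(m-1)\geq 2$ choices at every step. Your handling of a possible $\lambda_i=0$ (through Lemma~\ref{zero1}, under the hypothesis that $-1$ is a $k$-th power) is in fact more explicit than the paper's, which routes that case through Remark~\ref{solnn} without comment.
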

\begin{proof}
It might be possible that $\lambda_i = 0$ for some $1 \leq i \leq m$. In that case, by rearranging the system of equations, we may assume $\lambda_1 = 0$. For every $\lambda_i$, let $S_i$  be the set of solutions of equation $X_i^k +Y_i^k= \lambda_i$  and  $|S_i| = \mathcal{N}_i$. Let $V_{ir}$ be subsets of $S_i$  defined as in the proof of Proposition \ref{2var1}, where $1 \leq i \leq m$ (the index $r$ indicates that there could be several such subsets). Then for fixing $\lambda_i$, $1 \leq i \leq m$, $V_{ir}=V_{u,v}$ for some $(u,v)\in S_i$. Hence $|V_{ir}|\leq k^2$ and $V_{ir} \cap V_{is} =\phi$ for $r \neq s$. Now by Remark~\ref{solnn}, if we take $\mathcal{C}(k, m) = 4m^2k^{16}$, then for every $q> \mathcal{C}(k,m)$ we get 
$$\mathcal{N}_i > q-k^4\sqrt{q} > k^{16}( 4m^2- 2m) = k^{16}2m( 2m- 1)> 2mk^{2}+1,$$
which  implies that we get at least $2m$ many disjoint $V_{ir}$, for every $1 \leq i \leq m$. Let $t_i$ be the total number of disjoint $V_{ir}$, where $1 \leq i \leq m$ and $t_i \geq 2m$, i.e $1 \leq r \leq t_i$. 
 
Let $W_i=\mathop{\dot{\bigcup}}_{r=1}^{t_i} V_{ir}$, $1 \leq i \leq m$. Now take an element in $ W_1$, say $(a_{1}, b_{1})$ and define $A_1 = \{(a_{1}, b_{1})\}$. It is possible that, there exists an element $(c,d) \in W_2 = \mathop{\dot{\bigcup}}_{r=1}^{t_2} V_{2r}$, for some $1 \leq r \leq t_2$  such that $a_{1}^k=c^k$. Let us assume that such element $(c,d) \in V_{2r}$ exists for some $1 \leq r \leq t_2$. Now we claim that there does not exist any $(e,f) \in V_{2s}$, $s \neq r$ such that $a^k = e^k$. On contrary, let us assume that $(e,f) \in V_{2s}$ for some $s\neq r$ such that $e^k =a^k$. Since we have $c^k + d^k = \lambda_2 = e^k + f^k$ and $c^k = a^k = e^k$, therefore we get $d^k=f^k$. Thus $(e, f) \in V_{2r}$, which is not possible, as $V_{2r} \cap V_{2s}=\phi$. Thus there can exist an element $(c,d) \in V_{2r} \subseteq W_2$ for at most one $1 \leq r \leq t_2$. Similarly, there might exist an element $(e,f) \in V_{2s} \subseteq W_2$, such that $b_{1}^k=f^k$ for at most one $1 \leq s \leq t_2$. We saw that for $(a_1, b_1)$, there can exist at most two $V_{2r}$ and $V_{2s}$ such that  $a_1^k = c^k$ and $b_1^k = f^k$ where $(c,d ) \in V_{2r}$ and $(e,f)\in V_{2s}$. Thus we are left with at least $2m -2 >0$ many disjoint $V_{2r}$, from which we can choose any element, say $(a_2, b_2)$, which will satisfy the condition $a_1^k \neq a_2^k$ and $b_1^k \neq b_2^k$. Set $A_2=\{(a_1, b_1), (a_2, b_2) \}$. As $W_3 = \mathop{\dot{\cup}}_{r=1}^{t_3} V_{3r}$ and by definition of $V_{3r}$, we can see by same process that for each $(a_i,b_i)\in A_2$ there can exist at most two  $V_{2r}$ and $V_{2s}$ such that  $a_i^k = c^k$ and $b_i^k = f^k$ where $(c,d ) \in V_{2r}$ and $(e,f)\in V_{2s}$. Thus we get at least $2m-2\cdot2>0$ many disjoint $V_{3r}$'s, from which we can choose any element, say $(a_3, b_3)$ such that $a_i^k \neq a_3^k$ and $b_i^ \neq b_3^k$, $1 \leq i \leq 2$. Set $A_3=\{(a_i,b_i) \mid a_i^k \neq a_j^k, b_i^k \neq b_j^k, 1 \leq i \neq j \leq 3  \}$. Since $2m-2(m-1)>0$, therefore continuing like this we will get a set $A_m=\{(a_i,b_i) \mid a_i^k \neq a_j^k, b_i^k \neq b_j^k, 1 \leq i \neq j \leq m  \}$. By construction, it is clear that every element of $A_m$ will satisfy the required conditions and the cardinality of $A_m$ is $m$. 
\end{proof}

\begin{cor}\label{nvar2}
Let $k \geq 2$ and $m \geq 2$, and the characteristic of $\mathbb{F}_q$ be not $2$. Consider the system of equations over $\mathbb F_q$, 
\begin{eqnarray*}
X_1^k + Y_1^k + Z_1^k & = &\lambda_1,\\  \vdots  && \\
X_m^k + Y_m^k + Z_m^k &=& \lambda_m,
\end{eqnarray*}
where $\lambda_i$ are distinct elements in $\mathbb{F}_q$. Then, for all $q > 4{m^2}k^{16}$, the system of equations has at least $m$ solutions $(x_i, y_i, z_i) \in \mathbb{F}_q^2$, $1\leq i \leq m$, satisfying the following conditions:
\begin{enumerate}
\item $x_i^k + y_i^k + z_i^k = \lambda_i$.
\item  $x_i^k \neq y_i^k$ for all $ 1 \leq i \leq m$.
\item $x_i^k \neq x_j^k$ and $y_i^k \neq y_j^k$ for every $i \neq j$.
\end{enumerate}
\end{cor}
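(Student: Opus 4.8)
The plan is to reduce the three-variable system to the two-variable system handled in Proposition \ref{2var2} by making a judicious choice of the $z_i$'s, exactly as in the single-equation case of Corollary \ref{nvar1}. The subtlety is that I must pick the same power $z_i^k$ behaviour uniformly so that none of the reductions forces a right-hand side to vanish, and so that the distinctness conditions (1)--(3) are preserved. Since the $\lambda_i$ are distinct, at most one of them equals $0$; after rearranging I may assume $\lambda_1$ is the only possible zero. First I would fix a nonzero element $\alpha \in \mathbb{F}_q^*$ and set $z_i = 0$ for every $i$ with $\lambda_i \neq 0$, and $z_1 = \alpha$ in the exceptional case $\lambda_1 = 0$. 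This turns the $i$-th equation into $X_i^k + Y_i^k = \lambda_i'$, where $\lambda_i' = \lambda_i$ for $i \geq 2$ and $\lambda_1' = -\alpha^k \neq 0$. The new right-hand sides $\lambda_1', \ldots, \lambda_m'$ are still pairwise distinct provided $-\alpha^k \notin \{\lambda_2, \ldots, \lambda_m\}$, and since there are $q-1$ choices for $\alpha^k$ only finitely many are forbidden, so for $q$ large enough (certainly for $q > 4m^2k^{16}$, which already dwarfs $m$) such an $\alpha$ exists. Alternatively, and more cleanly, if $\lambda_1 = 0$ one can absorb $-\alpha^k$ by instead shifting: pick $\alpha$ with $\alpha^k$ distinct from all $\lambda_j - \lambda_1$; this is the same counting argument.

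Once the reduction is in place, I would invoke Proposition \ref{2var2} directly: for all $q > 4m^2 k^{16}$ the system $X_i^k + Y_i^k = \lambda_i'$, $1 \leq i \leq m$, has at least $m$ solutions $(x_i, y_i) \in \mathbb{F}_q^2$ satisfying $x_i^k + y_i^k = \lambda_i'$, $x_i^k \neq y_i^k$ for all $i$, and $x_i^k \neq x_j^k$, $y_i^k \neq y_j^k$ for $i \neq j$. Setting $z_i = 0$ (resp. $z_1 = \alpha$) as above, the triple $(x_i, y_i, z_i)$ then satisfies $x_i^k + y_i^k + z_i^k = \lambda_i$, which is condition (1), while conditions (2) and (3) for the triples are literally conditions (2) and (3) for the pairs and hence carry over verbatim. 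This yields the required $m$ solutions.

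The only genuine obstacle is the bookkeeping around the distinctness of the modified constants $\lambda_i'$: one must ensure that replacing $\lambda_1 = 0$ by $-\alpha^k$ does not collide with another $\lambda_j$, since Proposition \ref{2var2} requires the right-hand sides to be distinct. This is handled by the pigeonhole observation that the map $\alpha \mapsto \alpha^k$ has image of size $(q-1)/(k,q-1) \geq (q-1)/k$, which exceeds $m-1$ whenever $q > 4m^2 k^{16}$, so a safe choice of $\alpha$ always exists. Everything else is immediate from Proposition \ref{2var2}, so the corollary follows without any new estimate.
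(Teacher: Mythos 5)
Your proposal is correct and follows essentially the same route as the paper: reduce the three-variable system to the two-variable system of Proposition~\ref{2var2} by a suitable choice of the $Z_i$'s and transfer conditions (1)--(3) verbatim. In fact you make explicit the one point the paper leaves implicit under ``suitable choice of $Z_i$,'' namely that the modified constants $\lambda_i'$ remain pairwise distinct (handled by your counting of $k$-th powers), so no further comment is needed.
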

\begin{proof}
As in previous Proposition, by suitable choice of $Z_i$, say $\alpha_i$ in $\mathbb F_q$, we get a new system of equations
\begin{eqnarray*}
X_1^k + Y_1^k &=& \lambda_1', \\ \vdots  &&   \\
X_m^k + Y_m^k &=& \lambda_m',
\end{eqnarray*}
where $\lambda_i' = \lambda - \alpha_i^k \neq 0$. Now by Proposition~\ref{2var2}, for all $q > 4{m^2}k^{16}$, we get at least $m$ many required solutions of our system of equations, which satisfies our conditions. Hence we are done.
\end{proof}

\begin{remark}\label{2soln}
Let $q=p^m$, $m\geq 1$ where $p$ is an odd prime. Then the equation $X_1^{k} + X_2^{k} = 1$, where $k = q-1$, has only two solutions $(x_1, x_2)$ and $(y_1, y_2)$ with the property $x_1^k \neq y_1^k$ and $x_2^k \neq y_2^k$. For this, note that $\mathbb{F}_q^*$ is a cyclic group of order $q-1$. Hence, if $x \in \mathbb{F}_q$, then $x^{q-1}\in \{0, 1\}$. Thus we get only two solutions, $(a,0)$ and $(0,a)$, where $a \in \mathbb{F}_q^*$. In the statement of our theorem, we need $q$ large enough and the proof relies on having large enough solutions over the base field. This example indicates the role of the same. 
\end{remark}

%%%%%%%%%%%%%%%%%%%%%%%%%%%%

\section{Similarity classes in $T_n(\mathbb F_q)$ and reduction to Indecomposable matrices}\label{section-similarity-classes}
To solve the Waring problem it is enough to do it for a similarity/conjugacy class representative. Writing the conjugacy classes of $T_n(\mathbb F_q), B_n(\mathbb F_q), U_n(\mathbb F_q)$ and $N_n(\mathbb F_q)$ is a difficult problem and is yet to be understood fully (see~\cite{KO05, MMH, VA91, VA92, VAGO17}). We briefly recall the graph representation of matrices in $M_n(\mathbb{F}_q)$, the set of all $n\times n$ matrices, see \cite{MMH}, which we use in our work. Every $A = [a_{ij}] \in M_n$ is the adjacency matrix of a directed graph $G_A = (V_A, E_A)$ with a weight function $w_A \colon [n] \times [n]  \rightarrow \mathbb{F}_q^*$, whose support is $E_A$ and $[n]=\{1,2,\ldots, n\}$. More precisely,
 $$V_A =[n];\ \ E_A =\{(i,j)\in [n] \times [n] \mid a_{ij}  \neq 0\};\ \ w_A(i,j) = a_{ij}.$$
Each element of $V_A$ (respectively $E_A$) is called a vertex (respectively an arc) of the graph $G_A$. Each arc $(i,j) \in E_A$ is visualized as $i \rightarrow j$, in which $i$ (respectively $j$) is called the tail (respectively the head) of the arc $(i, j)$, and $w_A(i, j)$ is called the weight of the arc $(i, j)$. Call $G_A = (V_A, E_A)$ the graph of $A$, and $\Tilde{G}_A = (V_A, E_A, w_A)$ the weighted graph of $A$. When $A \in N_n$, the graph of $A$ is simple and it consists of some arcs $(i,j) \in [n] \times [n]$ with $i < j$. A partition of $[n]$ is given by $S_1 \cup \cdots \cup S_m$ where partition subsets $S_i \neq \phi$. For the uniqueness of expression, we assume that the minimal elements of $S_1, \ldots, S_m$ are in ascending order, and write the partition as $S'_1 | \cdots | S'_m$, where the elements of $S'_i$ are in ascending order. For example, the partition of $\{4, 1\} \cup \{6, 5\}\cup \{3, 2\} $ of $[6]$ will be expressed as $14|23|56$. Let us understand this by an example: 

\begin{example}
    Let $A$ be the matrix \[ \begin{bmatrix}  0 & 1 & 1  & 0\\
   & 0 & 0  & 0\\     &   & 0  & 1\\     &   &    & 0\\ \end{bmatrix}, \]
then it will be presented as $12|34:13$ and the graph associated with $A$ will be
\begin{center}
\begin{tikzpicture}

\centering
	\vertex (1) at (-2, -2) [label=above:${1}$] {}; 
	\vertex (2) at (2, -2) [label=above:${2}$] {};
	\vertex (3) at (-2, -4) [label=below:${3}$] {}; 
	\vertex (4) at (2, -4) [label=below:${4}$] {};
\path
	   % Note that the word "path" here isn't used in the graph-theory sense; the \path command
	   % is always used prior to the list of edges; here, coincidentally, they do form an actual path.
(1) edge (3)
(1) edge (2)
(3) edge (4);
\end{tikzpicture}.
    
\end{center}
In the graph presentation of $A$, $13$ is known to be an extra arc of the graph, see \cite{MMH}.
\end{example}

The \textit{principal} submatrices of a matrix are the matrix itself, and those submatrices obtained from it by repeatedly deleting out a row and a column of the same index. A matrix $A \in T_n(\mathbb{F}_q)$ is said to be \textit{indecomposable} if $A$ is $B_n$-similar to $A_1 \oplus A_2$ implies that $A_1$ or $A_2$ has size $0 \times 0$, see last paragraph of~\cite[Section 3]{CX16} for details. Another definition used in the literature is that a matrix $A \in M_n$ is said to be indecomposable if no permutation matrix $P \in M_n$ satisfies that $PAP^T$ can be written as a direct sum of two proper principal submatrices. Both these definitions of indecomposable matrices are equivalent when referring to the indecomposable Belitski\v{i}'s form as mentioned in~\cite[Section 2.3]{MMH}. We mention the following result from~\cite[Corollary 4.3]{MMH}.
\begin{prop}
Let $A \in N_n(\mathbb{F}_q)$ be Belitski\v{i}’s canonical form. Then, $A$ is indecomposable if and only if the graph $G_A = ([n], E_A)$ is connected. 
\end{prop}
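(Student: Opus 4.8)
The plan is to read the decomposition of $A$ directly off its graph, using the \emph{combinatorial} description of indecomposability rather than the $B_n$-similarity one. Since the excerpt has already recorded that, for a Belitski\v{i} canonical form, the statements ``$A$ is $B_n$-similar to a nontrivial direct sum'' and ``some permutation conjugate $PAP^{T}$ is a direct sum of two proper principal submatrices'' are equivalent, I will establish the clean dichotomy: $A$ is \emph{decomposable} (in the permutation sense) if and only if $G_A$ is disconnected, and then negate both sides.

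The argument rests on two elementary observations. First, conjugating by a permutation matrix $P$ merely relabels vertices: $G_{PAP^{T}}$ is the image of $G_A$ under the bijection $[n]\to[n]$ induced by $P$, so $G_{PAP^{T}}\cong G_A$ as directed graphs, and in particular connectivity is preserved. Second, for a fixed $r$ with $0<r<n$, the equality $PAP^{T}=B\oplus C$ with $B$ of size $r\times r$ and $C$ of size $(n-r)\times(n-r)$ says exactly that every entry of $PAP^{T}$ outside the two diagonal blocks vanishes, i.e.\ $G_{PAP^{T}}$ has no arc in either direction between $\{1,\dots,r\}$ and $\{r+1,\dots,n\}$. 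Transporting this back through $P^{-1}$, the existence of such a $P$ is equivalent to the existence of a partition $[n]=V_1\sqcup V_2$ into nonempty sets with no arc of $G_A$ joining $V_1$ to $V_2$ in either direction, and that is precisely the statement that $G_A$ is disconnected. (Here ``connected'' for the directed graph $G_A$ is understood in the weak sense, the natural one since $G_A$ is acyclic for $A\in N_n$.)

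Combining the two observations yields both implications at once. If $A$ is decomposable, pick a witnessing $P$, read off the partition given by the two blocks, and pull it back by $P^{-1}$ to a disconnecting partition of $G_A$; conversely, if $G_A$ is disconnected, take one weak component as $V_1$ and its complement as $V_2$, choose a permutation listing the vertices of $V_1$ before those of $V_2$, and check that the resulting conjugate is block diagonal with two proper blocks. Hence $A$ is indecomposable iff $G_A$ is connected. The only delicate point, and the one place where it really matters that $A$ is a Belitski\v{i} canonical form lying in $N_n$, is the reduction to the combinatorial definition: a matrix merely $B_n$-similar to a direct sum need not itself be block diagonal, so without the cited equivalence (equivalently, without the fact that the Belitski\v{i} form of a direct sum is the direct sum of the Belitski\v{i} forms of the summands) a $B_n$-decomposable $A$ could a priori still have a connected graph. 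Granting that input, the rest is a direct unwinding of definitions.
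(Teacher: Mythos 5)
Your argument is sound as far as it goes, but you should be aware that the paper supplies no proof of this proposition at all: it is quoted directly from \cite[Corollary 4.3]{MMH}, so there is no internal argument to match yours against. What you actually prove is the combinatorial half: conjugation by a permutation matrix only relabels the vertices of $G_A$, and the existence of a simultaneous row–column permutation making the matrix block diagonal with two proper blocks is exactly the existence of a partition $[n]=V_1\sqcup V_2$ into nonempty sets with no arc between them in either direction, i.e. disconnectedness of the underlying undirected graph (the correct reading of ``connected'' here, as you note, since $G_A$ is acyclic for $A\in N_n(\mathbb F_q)$). That part is correct, including the detail that listing the vertices of one weak component first, in increasing order, makes the two diagonal blocks genuine principal submatrices. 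The substantive content of the proposition, however, lies precisely where you place your single external input: the equivalence, for Belitski\v{i} canonical forms, between indecomposability under $B_n$-similarity and indecomposability under permutation conjugation. The paper itself only records that equivalence with a pointer to \cite[Section 2.3]{MMH}; it is a nontrivial consequence of Belitski\v{i}'s reduction (uniqueness of the canonical form and its compatibility with direct sums), not a formality, and neither direction is obvious since permutation similarity is not $B_n$-similarity. So your write-up is an honest reduction of the proposition to that cited fact, consistent with how the paper treats it, but if a self-contained proof were required, that equivalence is the step you would still owe.
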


Let $A = (a_{ij} ) \in T_n(\mathbb{F}_q)$. The entries of $A$ are ordered as follows
$$a_{ij}  \preccurlyeq a_{i'j'} \ \ if \ \ i > i',\  or \ \  i = i' \ \ but \ \  j < j'.$$
 In general, we can reduce the matrix $A$ to its canonical form inductively ~\cite{CX16}. We will use the same technique, include this discussion here, as it is crucial for our work.
%It is clear that, if $A$ and $B$ are $B_n$-conjugate, then $a_{ii}=b_{ii}$ for all $1 \leq i\leq n$.

\begin{lemma}\label{red1}
Let $A\in T_n(\mathbb{F}_q)$ such that $a_{ll} \neq a_{rr}$, for some fixed $1 \leq r, l \leq n$. If $A'$ is the (Belitski\v{i}’s) canonical form of $A$ under $B_n$-similarity, then $a'_{lr} = 0 = a'_{rl}$.
\end{lemma}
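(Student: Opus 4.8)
The plan is to use the inductive description of Belitski\v{\i}'s reduction algorithm applied to the entries of $A$ in the order $\preccurlyeq$, and to exploit the fact that the admissible similarity transformations $P \in B_n(\mathbb F_q)$ available at each stage act on an off-diagonal entry $a_{ij}$ by adding multiples of entries that are strictly earlier in the order, together with a scaling that involves only the diagonal entries $a_{ii}$ and $a_{jj}$. The key elementary computation is the following: if $P = I + t E_{rl}$ (or $I + t E_{lr}$) is an elementary unipotent matrix, then conjugating $A$ by $P$ changes the $(l,r)$ entry (respectively the $(r,l)$ entry) by a term proportional to $a_{ll} - a_{rr}$ plus terms supported on strictly $\preccurlyeq$-earlier positions; since $a_{ll} - a_{rr} \neq 0$, this coefficient is a unit, so the transvection $P = I + \frac{t}{a_{rr}-a_{ll}} E_{rl}$ can be chosen to cancel whatever value currently sits in the $(l,r)$ slot, and similarly for $(r,l)$.

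Concretely, I would first record this one-entry lemma: writing $B = P^{-1} A P$ with $P = I + c E_{rl}$, one has $b_{lr} = a_{lr} + c(a_{ll} - a_{rr}) + (\text{combination of } a_{ij} \text{ with } a_{ij} \prec a_{lr})$, and the entries that are $\prec$-earlier than $a_{lr}$ are unchanged by this conjugation. The analogous statement holds with $l$ and $r$ swapped, using $P = I + c E_{lr}$. These are the two transvections that Belitski\v{\i}'s algorithm uses precisely to clear the positions $(l,r)$ and $(r,l)$; because $a_{ll} \neq a_{rr}$ the relevant coefficient is invertible, so the algorithm does set both of these entries to zero and they stay zero at all later stages (later transvections only affect positions that are $\preccurlyeq$-later than the one being cleared). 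Hence in the canonical form $A'$ we get $a'_{lr} = 0 = a'_{rl}$.

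The main obstacle is bookkeeping rather than conceptual: one must make sure that when the algorithm reaches the position $(l,r)$ (and $(r,l)$), the conjugations performed to clear it do not disturb the position once cleared, and conversely that clearing earlier positions has not introduced obstructions. This is handled by the standard observation that a transvection $I + c E_{pq}$ conjugation only modifies entries $(i,j)$ with $(i,j)$ no earlier than $\min$ of the positions $(p,j)$ and $(i,q)$ in the $\preccurlyeq$ order, so the "triangular" structure of the reduction with respect to $\preccurlyeq$ is preserved; this is exactly the inductive setup from \cite{CX16} that the paper has already invoked. A second minor point is to note that scalings by diagonal matrices in $B_n(\mathbb F_q)$ commute with the property of an entry being zero, so they cause no trouble. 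Once these two routine facts are in place, the statement follows immediately from the invertibility of $a_{ll} - a_{rr}$.
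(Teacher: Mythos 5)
Your proposal is correct and follows essentially the same route as the paper: the paper compares the $(l,r)$ entries of $A^{(t)}S^{(t)} = S^{(t)}A^{(t+1)}$ for an admissible $S^{(t)}$ and extracts the coefficient $(a_{ll}-a_{rr})s^{(t)}_{lr}$, which is exactly your transvection computation, and both arguments conclude from the invertibility of $a_{ll}-a_{rr}$ that the entry can be cleared at its reduction step and remains zero afterwards. Only a cosmetic slip: for $l<r$ the relevant admissible transvection is $I + cE_{lr}$ (not $I + cE_{rl}$, which lies outside $B_n$), and $a'_{rl}=0$ needs no argument since the canonical form is upper triangular.
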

\begin{proof}
Without loss of generality, we may assume that $r > l$ and  $l$ is the largest positive integer such that $a_{rr}\neq a_{ll}$. As $r>l$, therefore $a'_{rl}=0$.

Suppose that all the entries before the $(l, r)$-position have already been reduced and after $t$-th step we obtain the upper triangular matrix $A^{(t)}$. We denote the admissible transformation group by $B^{(t)}_n$ which preserves all reduced entries before the $(l, r)$-position, that is, 
$$B^{(t)}_n=\{ S \in B_n \mid A^{(t)}S  \equiv_{\preccurlyeq (l,r-1)} SA^{(t)}\}.$$
Now, we reduce the entry $a^{(t)}_{lr}$ under the admissible transformations in $B^{(t)}_n$. Let $S^{(t)} = (s^{(t)}_{ij} ) \in B^{(t)}_n$ and $A^{(t+1)} = S^{{(t)}^{-1}}A^{(t)}S^{(t)}$. We have $A^{(t)}S^{(t)} = S^{(t)}A^{(t+1)}$.  If we compare $lr$-entry of both sides, then we get
$$a^{(t)}_{l,l}s^{(t)}_{l,r}+a^{(t)}_{l,l+1}s^{(t)}_{l+1,r}  +\cdots+ a^{(t)}_{lr} s^{(t)}_{rr} = s^{(t)}_{ll} a^{(t+1)}_{lr} + s^{(t)}_{l,l+1}a^{(t+1)}_{l+1,r} + \cdots+ s^{(t)}_{l,r}a^{(t+1)}_{r,r}.$$
Note that $(i, r) \preccurlyeq (l, r)$ for $i = l + 1, \ldots, r - 1$. Then we have $a^{(t+1)}_{ir} = a^{(t)}_{ir}$ for $i =l + 1, \ldots, r - 1$. The previous equation changes into
$$a^{(t)}_{l,l}s^{(t)}_{l,r} + a^{(t)}_{l,l+1}s^{(t)}_{l+1,r}  + \cdots + a^{(t)}_{lr} s^{(t)}_{rr} = s^{(t)}_{ll} a^{(t+1)}_{lr} + s^{(t)}_{l,l+1}a^{(t)}_{l+1,r} + \cdots+ s^{(t)}_{l,r-1}a^{(t)}_{r-1,r}+ s^{(t)}_{l,r}a^{(t)}_{rr}$$
which gives 
$$(a^{(t)}_{l,l}-a^{(t)}_{rr})s^{(t)}_{l,r}+\alpha = s^{(t)}_{ll} a^{(t+1)}_{lr},$$
where $\alpha=(a^{(t)}_{l,l+1}s^{(t)}_{l+1,r}  +\cdots+ a^{(t)}_{lr} s^{(t)}_{rr}) - (s^{(t)}_{l,l+1}a^{(t)}_{l+1,r} + \cdots+ s^{(t)}_{l,r-1}a^{(t)}_{r-1,r}). $
As $a^{(t)}_{ll} \neq a^{(t)}_{rr}$, therefore there exist $S^{(t)} \in B^{(t)}_n$ such that $s^{(t)}_{l,r} = (s^{(t)}_{ll} a^{(t+1)}_{lr}-\alpha)(a^{(t)}_{l,l}-a^{(t)}_{rr})^{-1}$. Thus we get an $S^{(t)} \in B^{(t)}_n$ such that $a^{(t+1)}_{lr}=0$ in $A^{(t+1)} = S^{{(t)}^{-1}}A^{(t)}S^{(t)}$. Hence if $a_{ll}\neq a_{rr}$, then $(A^{(t+1)})_{lr}=0$, which implies the $(l,r)$-entry of the canonical form of $A$ is zero.         
\end{proof}

\begin{cor}\label{dia}
Let $A \in  T_n(\mathbb{F}_q)$ such that all diagonal entries of $A$ are distinct. Then Belitski\v{i}’s canonical form of $A$ under $B_n$-similarity is a diagonal matrix.
\end{cor}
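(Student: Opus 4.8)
The plan is to read this off directly from Lemma~\ref{red1}. First I would note that $B_n$-similarity fixes the diagonal pointwise: if $P\in B_n(\mathbb{F}_q)$, then both $P$ and $P^{-1}$ are upper triangular, so $(P^{-1}AP)_{ii}=a_{ii}$ for every $i$. Consequently the hypothesis that the diagonal entries $a_{11},\dots,a_{nn}$ are pairwise distinct is preserved at every stage of the Belitski\v{i} reduction, and in particular $a_{ll}\neq a_{rr}$ for all $l\neq r$, whether we look at $A$ or at any intermediate form $A^{(t)}$.

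Next, fix an arbitrary pair $l<r$. Since $a_{ll}\neq a_{rr}$, Lemma~\ref{red1} applies to this pair and gives $(A^{\infty})_{lr}=0$ (the companion conclusion $(A^{\infty})_{rl}=0$ is automatic, as $A^{\infty}$ is upper triangular). Letting $(l,r)$ range over all pairs with $l<r$ shows that every strictly upper triangular entry of $A^{\infty}$ vanishes, so $A^{\infty}$ is the diagonal matrix $\diag(a_{11},\dots,a_{nn})$.

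There is no real obstacle here; the statement is a direct corollary. The one point I would make explicit is that Lemma~\ref{red1} reduces entries one at a time in the order $\preccurlyeq$, so to conclude about \emph{all} off-diagonal positions at once I would invoke the lemma inductively along that order, using that the admissible transformation group $B_n^{(t)}$ appearing in its proof is defined precisely so that clearing the $(l,r)$-entry does not disturb the entries already set to zero. With that observation the corollary follows.
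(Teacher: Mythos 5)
Your argument is correct and is exactly how the paper intends the corollary to follow: the paper gives no separate proof, treating it as an immediate consequence of Lemma~\ref{red1} applied to every pair $(l,r)$ with $l\neq r$, which is precisely what you do. Your added observations (that $B_n$-conjugation fixes the diagonal, and that the reduction respects the order $\preccurlyeq$) are harmless extra care, not a departure from the paper's route.
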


Thus by Lemma~\ref{red1}, we get that the matrix which has at least two different eigenvalues, is a decomposable matrix. Let $A \in T_n(\mathbb{F}_q)$ and $m$ be a positive integer. Then the $ij$-th entry of $A^m$ is given by $\sum \big( \prod_{k=1}^m A_{i_{k-1} i_k} \big)$, where the sum is over all non decreasing $(m+1)$-tuples  $i = i_0 \le i_1 \le \ldots \le i_m = j$ starting at $i$ and ending at $j$. 

\begin{lemma}\label{reduction}
Let $ A \in T_n(\mathbb{F}_q)$ be a $k$-th  power. Let $B \in T_{n+1}(\mathbb{F}_q)$ be obtained in such a way that $A$ is the principal submatrix of $B$ obtained by eliminating $l$-th row and $l$-th column. Further assume that $b_{il}=b_{li}=0$ for all $i \neq l$ with $b_{ll}=x^k$ for some $x \in \mathbb{F}_q^*$. Then, $B$ is also a $k$-th  power.
\end{lemma}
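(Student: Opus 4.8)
The plan is to write down explicitly a $k$-th root of $B$ inside $T_{n+1}(\mathbb{F}_q)$, obtained from a $k$-th root of $A$ by ``inflating'' it with a $1\times 1$ block at position $l$. Since $A$ is a $k$-th power, fix $D=(d_{ij})\in T_n(\mathbb{F}_q)$ with $D^k=A$. Let $\phi\colon\{1,\dots,n+1\}\setminus\{l\}\to\{1,\dots,n\}$ be the order-preserving bijection, so $\phi(i)=i$ for $i<l$ and $\phi(i)=i-1$ for $i>l$; this is exactly the relabelling that identifies the principal submatrix of $B$ with $A$, i.e. $a_{\phi(i)\phi(j)}=b_{ij}$ for $i,j\neq l$. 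Define $E=(e_{ij})\in M_{n+1}(\mathbb{F}_q)$ by $e_{ll}=x$, $e_{il}=e_{li}=0$ for $i\neq l$, and $e_{ij}=d_{\phi(i)\phi(j)}$ for $i,j\neq l$. First I would check that $E$ is upper triangular: a strictly lower entry of $E$ either lies in the $l$-th row or column, hence is $0$, or equals $d_{\phi(i)\phi(j)}$ with $\phi(i)>\phi(j)$, hence is $0$ because $D\in T_n(\mathbb{F}_q)$. So $E\in T_{n+1}(\mathbb{F}_q)$.

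Next I would compute $E^k$ and identify it with $B$. The cleanest route is to observe that $E$ is obtained from the block-diagonal matrix $D\oplus[x]$ by the simultaneous row-and-column permutation that moves the last index into position $l$ (and shifts the old indices $l,\dots,n$ up by one): thus $E=P^{-1}(D\oplus[x])P$ for the permutation matrix $P$ associated with $\phi$ (extended by $l\mapsto n+1$). Consequently
\[
E^k=P^{-1}\bigl(D\oplus[x]\bigr)^k P=P^{-1}\bigl(D^k\oplus[x^k]\bigr)P=P^{-1}\bigl(A\oplus[x^k]\bigr)P .
\]
Finally I would note that $P^{-1}(A\oplus[x^k])P$ is, by the very definition of $P$, the matrix whose principal submatrix obtained by deleting row and column $l$ is $A$, whose $(l,l)$-entry is $x^k$, and all of whose other entries in row $l$ and column $l$ are $0$. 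By hypothesis this description coincides entry-by-entry with $B$, so $E^k=B$ and $B$ is a $k$-th power.

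As an alternative that avoids permutation matrices entirely, one can use the non-decreasing-path formula for powers recalled just before the statement: for any $m$, every non-decreasing path from $i$ to $j$ that either enters the vertex $l$ from below or leaves $l$ upward uses a zero entry of the $l$-th column or row of $E$, so it contributes $0$ unless it avoids $l$ altogether (or is the constant path at $l$). Hence $(E^m)_{ij}=(D^m)_{\phi(i)\phi(j)}$ for $i,j\neq l$, $(E^m)_{ll}=x^m$, and $(E^m)_{il}=(E^m)_{li}=0$ for $i\neq l$; taking $m=k$ gives $E^k=B$ as before.

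I expect no genuine analytic obstacle here; the one place that needs care is the purely combinatorial bookkeeping, namely verifying that $P^{-1}(A\oplus[x^k])P$ (equivalently, the path count) matches $B$ on the nose, which is precisely where the hypothesis $b_{il}=b_{li}=0$ for all $i\neq l$ is used. The hypotheses $x\in\mathbb{F}_q^*$ and $b_{ll}=x^k$ enter only to make the scalar $x$ available as the $(l,l)$-entry of the root $E$.
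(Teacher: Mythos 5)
Your proposal is correct, and the heart of it is the same as the paper's: both inflate a fixed $k$-th root $D$ of $A$ to a matrix in $T_{n+1}(\mathbb{F}_q)$ by inserting the scalar $x$ at position $(l,l)$ and zeros elsewhere in row and column $l$ (your $E$ is exactly the paper's $P'$). The only difference lies in how $E^k=B$ is verified: the paper checks it entry by entry via the non-decreasing path formula, splitting into the cases $i,j<l$, $i,j>l$, and $i<l<j$ (and treats $l$ at the ends separately as a direct sum), whereas your primary route observes that $E$ is a simultaneous row-and-column permutation of $D\oplus[x]$ and conjugates, which packages that bookkeeping more cleanly --- note the permutation matrix $P$ need not be upper triangular, but that is harmless since it is used only to compute $E^k$, and $E$ itself is checked to be triangular. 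Your stated alternative via path counting is precisely the paper's argument, so nothing is missing.
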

\begin{proof}
If $l=1$ or $l=n$, then $B= [x^k]  \oplus A$ and $B=A \oplus [x^k]$, respectively and hence $B$ is a $k$-th power. Now we may assume $ 1< l < n$. Let $A=P^k$ for some $P =(P_{ij})\in T_n(\mathbb{F}_q)$. We know that the $ij$-th entry of $A$ is given by  $\sum \big( \prod_{m=1}^k P_{i_{m-1} i_m} \big)$ where the sum is over all non decreasing $(k+1)$-tuples $i = i_0 \le i_1 \le \ldots \le i_k = j$ starting at $i$ and ending at $j$. Now the matrix $B$ is as follows, 
\[ b_{ij} = \begin{cases} 
   x^k & i=j=l \\
   0 & i=l \  or \  j=l, \ \rm{but} \  i\neq j\\
   a_{\sigma(i)\sigma(j)} & \rm{otherwise},   
\end{cases}
\] 
where $\sigma \colon [n+1] \rightarrow [n]$ is a map sending $i$ to $i-1$ for $i > l$ and fixing rest of the elements, and $[n]=\{1,2, \ldots, n\}$.
Similarly, we define a matrix $P'\in T_{n+1}(\mathbb F_q)$ with entries
 \[ p'_{ij} = \begin{cases} 
     x & i=j=l \\
     0 & i=l \  or \  j=l, \ \rm{but} \  i\neq j\\
     P_{\sigma(i)\sigma(j)} & \rm{otherwise}.   
    \end{cases}
\]
We claim that $P'^k=B$. For simplicity we denote $X=(X_{ij}):=P'^k$. It is clear that, $X_{il}=X_{li}=0= B_{il}= B_{li}$, when $i \neq l$ and $X_{ll} = x^k$. So now we can assume that $i\neq l $ and $j \neq l$. We know
\begin{eqnarray*}
X_{ij} & =& \sum \big( \prod_{m=1}^k P'_{i_{m-1} i_m} \big) = \sum \big( \prod_{m=1}^k P_{\sigma(i_{m-1}) \sigma(i_m)} \big)\\
    & =& \sum \big( P_{\sigma(i_0) \sigma(i_1)}P_{\sigma(i_1) \sigma(i_2)} \cdots P_{\sigma(i_{k-1}) \sigma(i_k) }\big),
 \end{eqnarray*}
where $i_0=i$ and $i_k=j$. Now, 
\begin{enumerate}
\item[{Case (i)}] if $i , j < l$, then $X_{ij}=\sum P_{i_0 i_1}P_{i_1 i_2} \cdots P_{i_{k-1}i_k } = A_{ij}=A_{\sigma(i)\sigma(j)}=B_{ij}$.
\item[{Case (ii)}]  If $i , j > l$, then 
$$
X_{ij} =\sum  P_{i_{0}-1, i_{1}-1}P_{i_{1}-1, i_{2}-1} \cdots P_{i_{k-1}-1, i_{k}-1 }  = A_{i-1,j-1} =A_{\sigma(i)\sigma(j)} =B_{ij}.
$$
\item[{Case(iii)}]  Finally consider $i <l <j$. Let $i_{r}-1<l<i_r$ for some $r$. Then 
$$
X_{ij} =\sum  P_{i_{0}, i_{1}}\cdots P_{i_{r-1}, i_{r}-1} \cdots P_{i_{k-1}-1, i_{k}-1 } = A_{i,j-1}  =A_{\sigma(i)\sigma(j)} =B_{ij}.
$$
\end{enumerate}
Hence $X_{ij}=B_{ij}$ for all $1 \leq i,j\leq n$.   
\end{proof}

\begin{example} In the previous Lemma, if we take $A = \begin{bmatrix}   a & b & c  \\     0 & d & e\\     0 & 0 & f \end{bmatrix}$, and further suppose it is a $k$-th   power. Then $B$ can be equal to one of the following matrices, for some $x$, all of which are $k$-th   powers, 
\[
  \begin{bmatrix}     x^k & 0 & 0 & 0\\     0 & a & b & c\\     0 & 0 & d & e\\
    0 & 0 & 0 & f \end{bmatrix},
  \begin{bmatrix}     a & 0 & b & c  \\     0 & x^k & 0 & 0\\     0 & 0 & d & e\\     0 & 0 & 0 & f \end{bmatrix},
  \begin{bmatrix}     a & b & 0 & c\\     0 & d & 0 & e\\     0 & 0 & x^k & 0\\
    0 & 0 & 0 & f \end{bmatrix},
\begin{bmatrix}     a  & b & c & 0 \\     0 & d & e & 0\\     0 & 0 & f & 0 \\
    0  & 0 & 0 & x^k \end{bmatrix}.
\]
\end{example} 

\begin{remark}\label{ind}
Let $A$ be a decomposable matrix in $T_n(\mathbb{F}_q)$. Then there are at most $n-2$ non-zero, non-diagonal entries in $A^{\infty}$, where $A^{\infty}$ is Belitski\v{i}’s canonical form of $A$. We will use the Lemma~\ref{red1} and Lemma~\ref{reduction}, to write the decomposable matrices in $T_n(\mathbb{F}_q)$, $n \leq 6$ as a sum of two $k$-th powers using the technique developed in the next Section. This is explained at the end of Section \ref{section-proof-theorem}. 
\end{remark}

%%%%%%%%%%%%%%%%%%%%%%%
\section{Some useful lemmas}\label{section-proof-theorem}

Let $C \in T_n(\mathbb{F}_q)$ be a matrix. If $C$ is a sum of $r $ many $k$-th   powers, then by equating the diagonals, we see that equations $c_{ii} = X_1^k + \cdots + X_r^k$ must have a solution over $\mathbb{F}_q$. Thus, if $C \in T_n(\mathbb{F}_q)$ is a $k$-th   power, then $c_{ii}$ must be a $k$-th   power for every $1 \leq i \leq n$. 
The proof of the following lemma follows from the Corollary \ref{dia} but we include an alternate proof here of which ideas we use in the later part of this paper too. 

\begin{lemma}\label{mult}
Let $C =(c_{ij}) \in T_n(\mathbb{F}_q)$ be a matrix, whose diagonal entries are distinct $k$-th   powers. Then $C = A^k$, for some $A \in T_n(\mathbb{F}_q)$.
\end{lemma}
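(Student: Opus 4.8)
The plan is to build $A$ by hand, fixing its entries one super-diagonal at a time; this is the constructive alternative to invoking Corollary~\ref{dia}. First I would choose, for each $i$, some $a_{ii}\in\mathbb F_q$ with $a_{ii}^k=c_{ii}$; these exist by hypothesis, and since the $c_{ii}$ are pairwise distinct the chosen $a_{ii}$ are pairwise distinct as well. Let $A=(a_{ij})$ denote the upper triangular matrix having exactly these diagonal entries; the strictly upper entries $a_{ij}$, $i<j$, will be determined recursively on $d:=j-i$.

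The heart of the argument is the identity
$$(A^k)_{ij}=f(a_{ii},a_{jj})\,a_{ij}+h_{ij}\qquad(i<j),$$
where $h_{ij}$ is a polynomial expression in the diagonal entries and in those $a_{rs}$ with $0<s-r<j-i$ only. To prove it, expand $(A^k)_{ij}=\sum\prod_{m=1}^{k}a_{i_{m-1}i_m}$ over non-decreasing tuples $i=i_0\le i_1\le\cdots\le i_k=j$. Such a tuple visits a set of distinct values $i=v_0<v_1<\cdots<v_\ell=j$. If $\ell=1$ the tuple has the shape $i,\dots,i,j,\dots,j$ with the single ascent in some position $m$, contributing $a_{ii}^{m-1}a_{ij}a_{jj}^{k-m}$, and summing over $m=1,\dots,k$ gives precisely $a_{ij}f(a_{ii},a_{jj})$. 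If $\ell\ge2$, the term is a product of the jump-factors $a_{v_0v_1},\dots,a_{v_{\ell-1}v_\ell}$ — each of level $v_s-v_{s-1}$ strictly below $j-i$ — times a monomial in the diagonal entries, and in particular $a_{ij}$ does not occur; all such terms go into $h_{ij}$.

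Next I would note that because $c_{ii}\ne c_{jj}$ we have $a_{ii}^k\ne a_{jj}^k$, and since also $a_{ii}\ne a_{jj}$, Lemma~\ref{homeqn} forces $f(a_{ii},a_{jj})\ne0$. Now run the recursion on $d=j-i$ from $1$ up to $n-1$: once every $a_{rs}$ with $s-r<d$ has been fixed, $h_{ij}$ is a known scalar, and setting
$$a_{ij}:=\bigl(c_{ij}-h_{ij}\bigr)\,f(a_{ii},a_{jj})^{-1}$$
makes $(A^k)_{ij}=c_{ij}$. Since $(A^k)_{ii}=a_{ii}^k=c_{ii}$ automatically, after the last step we obtain $A\in T_n(\mathbb F_q)$ with $A^k=C$.

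I expect the combinatorial bookkeeping behind the displayed formula — verifying that the only \emph{new} contribution at level $d$ is the term linear in $a_{ij}$ with coefficient $f(a_{ii},a_{jj})$, while every other term uses only strictly shorter arcs — to be the one delicate point; everything afterward is forced. This peeling-off-one-super-diagonal device, together with the scalar $f$ and its non-vanishing when diagonal entries differ, is exactly the mechanism reused later in the paper.
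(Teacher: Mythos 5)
Your proposal is correct and is essentially the paper's own argument: the paper likewise fixes $k$-th roots on the diagonal, observes that $(A^k)_{ij}=a_{ij}f(a_{ii},a_{jj})+\alpha_{ij}$ with $\alpha_{ij}$ not involving $a_{ij}$, invokes Lemma~\ref{homeqn} to get $f(a_{ii},a_{jj})\neq0$ from the distinctness of the $c_{ii}$, and solves $a_{ij}=(c_{ij}-\alpha_{ij})f(a_{ii},a_{jj})^{-1}$. You merely make explicit the super-diagonal induction and the tuple-counting verification that the paper leaves implicit.
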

\begin{proof}
Let $A$ be a matrix in $T_n(\mathbb{F}_q)$. Then the matrix $A^k$ will look like,
\[
\begin{bmatrix}
a_{11}^k & a_{12}f( a_{11},a_{22})+\alpha_{12}  & \cdots  & a_{1,n} f( a_{11},a_{nn} )+\alpha_{1n} \\
0  & a_{22}^k  & \cdots &   a_{2n}f( a_{22},a_{nn} )+\alpha_{2n} \\
0 & 0   & \cdots &   a_{3n}f( a_{33},a_{nn} )+\alpha_{3n} \\
\vdots    & \vdots  & \ddots   & \vdots  \\
0 & 0 & \cdots &   a_{n-1,n}f(a_{n-1,n-1},a_{nn} )+\alpha_{n-1,n} \\
0 & 0    &  \cdots &   a_{nn}^k 
\end{bmatrix},
\]
where $f(a_{ii}, a_{jj}) = a_{ii}^{k-1}+ a_{ii}^{k-2} a_{jj}+\cdots +  a_{ii}a_{jj}^{k-2} + a_{jj}^{k-1}$, and $\alpha_{rs}$ is a polynomial in $a_{ij}$, $r \leq i,j \leq s$ and  ${a_{rs}}$ is not a term of $\alpha_{rs}$. Now we compare coefficients of $A^k$ and $C$. As $a_{ii}^k=c_{ii}$, and $c_{ii}$ are distinct, therefore by Lemma \ref{homeqn}, $f( a_{ii}, a_{jj})$ is non zero for every $1 \leq i,j \leq n$, where $i\neq j$. Thus $a_{ij}= (c_{ij} - \alpha_{ij}) f(a_{ii}, a_{jj})^{-1}$. Hence $C=A^k$, for some $A \in T_n(\mathbb{F}_q)$.  
\end{proof}

\begin{lemma}\label{power}
Let $C \in T_n(\mathbb{F}_q)$ such that diagonal entries are $k$-th   powers and $c_{rs}c_{st}=0$ for all $r < s < t$. If for every $c_{ij} \neq 0$, we have $c_{ii} \neq c_{jj}$ then $C=A^k$, for some $A \in T_n(\mathbb{F}_q)$. 
\end{lemma}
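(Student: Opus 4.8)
The plan is to construct $A\in T_n(\mathbb F_q)$ with $A^k=C$ directly, fixing the diagonal entries first and then the strictly-upper entries, in the spirit of the proof of Lemma~\ref{mult}; the staircase hypothesis $c_{rs}c_{st}=0$ will force all the ``interaction'' terms $\alpha_{ij}$ to vanish, so that the $(i,j)$ entry of $A^k$ reduces to $a_{ij}f(a_{ii},a_{jj})$.

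First I would choose, for each $i$, an element $a_{ii}\in\mathbb F_q$ with $a_{ii}^k=c_{ii}$; this is possible since each $c_{ii}$ is a $k$-th power (take $a_{ii}=0$ if $c_{ii}=0$). Next, for $i<j$, put $a_{ij}=0$ whenever $c_{ij}=0$. When $c_{ij}\neq 0$, the hypothesis gives $c_{ii}\neq c_{jj}$, and I claim $f(a_{ii},a_{jj})\neq 0$: the scalars $a_{ii},a_{jj}$ are not both $0$ (otherwise $c_{ii}=c_{jj}=0$), so if $f(a_{ii},a_{jj})=0$ then Lemma~\ref{homeqn} forces $a_{ii}^k=a_{jj}^k$, i.e.\ $c_{ii}=c_{jj}$, a contradiction. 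Hence I may set $a_{ij}=c_{ij}\,f(a_{ii},a_{jj})^{-1}$, and let $A=(a_{ij})$.

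By construction $a_{ij}=0$ exactly when $c_{ij}=0$ for $i<j$, so for every $r<s<t$ we have $a_{rs}a_{st}=0$ (otherwise $c_{rs}c_{st}\neq 0$, contradicting the hypothesis). Thus $A$ satisfies the hypothesis of Lemma~\ref{zero}, which yields $(A^m)_{rs}a_{st}=0$ for all $r<s<t$ and all $m\geq 1$. Then on the diagonal $(A^k)_{ii}=a_{ii}^k=c_{ii}$, and for $i<j$ I expand $(A^k)_{ij}=\sum_{l=i}^{j}(A^{k-1})_{il}a_{lj}$; the terms with $i<l<j$ vanish by Lemma~\ref{zero}, leaving $(A^k)_{ij}=a_{ii}^{k-1}a_{ij}+(A^{k-1})_{ij}a_{jj}$. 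Iterating this relation (equivalently: in the full expansion of $(A^k)_{ij}$, every term other than $a_{ii}^{m-1}a_{ij}a_{jj}^{k-m}$, $1\le m\le k$, contains a factor $a_{rs}a_{st}$ with $i\le r<s<t\le j$, hence is $0$) gives $(A^k)_{ij}=a_{ij}\big(a_{ii}^{k-1}+a_{ii}^{k-2}a_{jj}+\cdots+a_{jj}^{k-1}\big)=a_{ij}f(a_{ii},a_{jj})=c_{ij}$. Therefore $A^k=C$.

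The one delicate point is the non-vanishing of $f(a_{ii},a_{jj})$ for the indices $(i,j)$ with $c_{ij}\neq 0$, which is exactly where the hypothesis ``$c_{ij}\neq 0\Rightarrow c_{ii}\neq c_{jj}$'' enters, via Lemma~\ref{homeqn}; the remainder is the same bookkeeping as in Lemma~\ref{mult}, simplified because the correction terms $\alpha_{ij}$ all carry a factor $a_{rs}a_{st}$ with $r<s<t$ and hence drop out by Lemma~\ref{zero}.
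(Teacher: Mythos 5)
Your proof is correct and takes essentially the same route as the paper: build $A$ with $k$-th roots of the $c_{ii}$ on the diagonal and $a_{ij}=c_{ij}\,f(a_{ii},a_{jj})^{-1}$ off the diagonal (nonzero by Lemma~\ref{homeqn} since $c_{ii}\neq c_{jj}$ there), then use the staircase condition through Lemma~\ref{zero} to get $(A^k)_{ij}=a_{ij}f(a_{ii},a_{jj})$. If anything, your version is more self-contained, since the paper routes the argument through Belitski\v{i}'s canonical form and cites Lemma~\ref{mult} for the final identity without spelling out why the correction terms vanish, which you verify explicitly.
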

\begin{proof}
Let $C^{\infty}$ be Belitski\v{i}'s canonical form of $C$.   Consider a matrix $A$ such that $a_{ii}^k=c_{ii}$. If $c_{rs } \neq 0$, then by hypothesis  we get $c_{rr} \neq c_{ss}$, which implies that $f(a_{rr}, a_{ss}) \neq 0$. Thus we can define the entries $a_{rs}$ of $A$ as follows       
\[   
a_{rs} = 
    \begin{cases}
       (c_{rs}-m_{rs})f(a_{rr},a_{ss})^{-1} &\quad c_{rs} \neq 0,\\
       0 &\quad c_{rs}  = 0.
    \end{cases}
\]
 By the given hypothesis and Lemma  \ref{zero}, we get that if $c_{ij}=0$ for some $i \neq j$, then $(C^k)_{ij}=0$, for every $k>0$. As $c_{rs}c_{st}=0$ for all $r<s<t$, therefore by construction of $A$, it is clear that $a_{rs}a_{st}=0$ for all $r<s<t$. Then by lemma \ref{mult}, we can see that $C=A^k$.    
\end{proof}

\begin{lemma}\label{row}
Let $C \in T_n(\mathbb{F}_q)$ be a matrix whose diagonal entries are  $k$-th   powers. If  $c_{nn}\neq c_{ii}$, for all $1 \leq i \leq n-1$  and $c_{ij}=0$ for all $1 \leq i<j \leq n-1$, then $C=A^k$, for some $A \in T_n(\mathbb{F}_q)$.
\end{lemma}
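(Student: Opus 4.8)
The plan is to build a $k$-th root of $C$ entry by entry, in the order $\preccurlyeq$ on positions, in the spirit of the proof of Lemma~\ref{mult}. Recall from that proof that for any $A=(a_{ij})\in T_n(\mathbb F_q)$ one has
\[
(A^k)_{ij}=a_{ij}\,f(a_{ii},a_{jj})+\alpha_{ij},
\]
where $\alpha_{ij}$ is a polynomial in the entries $a_{rs}$ with $i\le r\le s\le j$ and $(r,s)\ne(i,j)$ (all of them $\preccurlyeq$-earlier than $(i,j)$) in which $a_{ij}$ itself does not occur. I will look for $A$ supported on the diagonal, on the first row, and on the last two columns $n-1$ and $n$. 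By the hypothesis $c_{ij}=0$ for $2\le i<j\le n-2$ the matrix $C$ is already supported on exactly these positions, so this shape is compatible with $A^k=C$.

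First fix $k$-th roots $a_{ii}$ of $c_{ii}$ for $1\le i\le n$, chosen to be equal whenever the corresponding $c_{ii}$ are equal. Then process the admissible off-diagonal positions $(i,j)$ in increasing $\preccurlyeq$-order; at each stage $\alpha_{ij}$ is already determined by the previously chosen entries, and we must solve $a_{ij}\,f(a_{ii},a_{jj})=c_{ij}-\alpha_{ij}$, which we can do as soon as $f(a_{ii},a_{jj})\ne 0$. By Lemma~\ref{homeqn}, $f(a_{ii},a_{jj})=0$ forces $a_{ii}^k=a_{jj}^k$, i.e.\ $c_{ii}=c_{jj}$. Consequently every position $(i,n)$ in the last column is fine, because $c_{ii}\ne c_{nn}$ by hypothesis, and so is any position $(1,j)$ or $(i,n-1)$ whose two diagonal entries are distinct. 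In particular this already finishes the case in which no pair of equal diagonal entries among $c_{11},\dots,c_{n-1,n-1}$ supports a nonzero entry of $C$.

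It remains to handle a position $(1,j)$ or $(i,n-1)$ with $c_{ii}=c_{jj}$ but $c_{ij}\ne 0$, which is the delicate part. Here I would first pass to Belitski\v{i}'s canonical form: by Lemma~\ref{red1} we may assume $c_{ij}=0$ whenever $c_{ii}\ne c_{jj}$ (one checks that this reduction keeps all entries inside the first row and the last two columns), so that in particular $c_{in}=0$ for $i<n$ and $C=C_0\oplus[c_{nn}]$ with $c_{nn}$ a $k$-th power; by Lemma~\ref{reduction} it then suffices to realise $C_0\in T_{n-1}(\mathbb F_q)$ as a $k$-th power. The graph of $C_0$ now splits into pieces on which the diagonal is constant — say equal to $\lambda^k$ — and since every arc of $C_0$ emanates from vertex $1$ or terminates at vertex $n-1$, each such piece is of the form $\lambda^k I+N$ with $N$ nilpotent of index at most $3$, the vertices not meeting any arc being isolated again by Lemma~\ref{reduction}. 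When $\lambda\ne 0$ such a piece is a $k$-th power via the binomial expansion $(\lambda I+M)^k=\lambda^k I+k\lambda^{k-1}M+\binom{k}{2}\lambda^{k-2}M^2$, solved for $M$ using that $k\lambda^{k-1}$ is a unit; when $\lambda=0$ the piece must be the zero matrix, a nonzero arc between two indices carrying diagonal value $0$ being ruled out exactly as in Example~\ref{example-necessary}, and then it is trivially a $k$-th power. The main obstacle, as the above makes clear, is the bookkeeping forced by repeated diagonal entries and by the degenerate value $0$, where the one-variable factor $f(a_{ii},a_{jj})$ can vanish; stripping the cross-eigenvalue entries via Lemma~\ref{red1} first, and then exploiting the rigidity of the arrow shape, is what makes the remaining constant-diagonal blocks tractable.
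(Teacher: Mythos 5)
Your first paragraph — the entry-by-entry construction on the support consisting of the diagonal, the first row and the last two columns, with $(A^k)_{ij}=a_{ij}f(a_{ii},a_{jj})+\alpha_{ij}$ and the observation that $f(a_{ii},a_{jj})\neq 0$ whenever $c_{ii}\neq c_{jj}$ (so every last-column position is harmless) — is sound and is the same mechanism as Lemma~\ref{mult}. The genuine gap is in how you dispose of the remaining constant-diagonal pieces. First, inverting the binomial expansion requires $k\lambda^{k-1}$ to be a unit, i.e.\ $p\nmid k$ and $\lambda\neq 0$, neither of which is among the hypotheses; when $p\mid k$ a piece $\lambda^kI+N$ with $N\neq 0$ is provably \emph{not} a $k$-th power (this is exactly Lemma~\ref{2*2}). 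Second, the claim that a nonzero arc between two indices carrying diagonal value $0$ is ``ruled out exactly as in Example~\ref{example-necessary}'' is false: the hypotheses only force $c_{nn}$ to differ from the other diagonal entries, so for instance the matrix with diagonal $(0,0,\dots,0,1)$ and single off-diagonal entry $c_{12}=1$ satisfies every hypothesis of the lemma, yet it is not a $k$-th power for any $k\geq 2$, since $a_{11}^k=a_{22}^k=0$ forces $a_{11}=a_{22}=0$ and then $(A^k)_{12}=a_{12}f(a_{11},a_{22})=0\neq 1$. So the two degenerate cases you wave away cannot be closed by a better argument: they are counterexamples to the statement as printed, which evidently needs the extra condition of Lemma~\ref{power} (nonzero entries only between distinct diagonal values), or the nonzero off-diagonal entries confined to the last column.

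For comparison, the paper's own proof is a two-line reduction: it asserts that the hypothesis yields $c_{rs}c_{st}=0$ for all $r<s<t$ and that nonzero entries occur only over distinct diagonal values, and then quotes Lemma~\ref{power}. Neither assertion follows from the printed hypothesis (e.g.\ $c_{1s}$ and $c_{sn}$ may both be nonzero, and a nonzero $c_{1j}$ with $c_{11}=c_{jj}$ is allowed), so the paper glosses over precisely the configurations on which your argument founders; your difficulty points at a missing hypothesis rather than a missing trick. Two smaller points: your parenthetical ``one checks that this reduction keeps all entries inside the first row and the last two columns'' is an unproved claim about Belitski\v{i}'s reduction (Lemma~\ref{red1} is a conjugation, not an erasure of entries), and your claim that the distinct-diagonal discussion ``already finishes'' the case where no equal diagonal pair supports a nonzero entry overlooks the correction term $\alpha_{1,n-1}$, which can be nonzero (through chains $1\to s\to n-1$) even when $c_{1,n-1}=0$ and $f(a_{11},a_{n-1,n-1})=0$; that case too would have to be routed through your second-paragraph reduction, once that reduction is justified.
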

\begin{proof}
By the given hypothesis $c_{rs}c_{st}=0$ for every $r<s<t$. As $c_{ii}\neq c_{nn}$ for all $i \neq n$ and $c_{ij}=0$ for all  $1 \leq i<j \leq n-1$. Thus we get $c_{ij}=0$, whenever $c_{ii}=c_{jj}$. Therefore by previous lemma $C=A^k$ for some $A \in T_n(\mathbb{F}_q)$.
\end{proof}

In the remaining part of this section, 
\begin{quote}
{\bf let us assume that  for any $\alpha \in \mathbb{F}_q$ the equation $X_1^k + X_2^k = \alpha$ has two solutions in $\mathbb{F}_q\times \mathbb F_q$, say $(x_1, y_1)$, $(x_2, y_2)$ such that $x_1^k \neq x_2^k$ and $y_1^k \neq y_2^k$}. 
\end{quote}
Now, we will see if we can write every element of $T_n(\mathbb{F}_q)$ as a sum of two $k$-th   powers with the above assumption.  In this section, we will talk about the matrices $C \in T_n(\mathbb F_q)$ such that $c_{ii}=c_{jj}$ for all $i \neq j$.  It is easy to see that if $C=A \oplus B$, then $C$ is a sum of two $k$-th powers if and only if both $A$ and $B$ are a sum of two $k$-th powers. Thus it is sufficient to look at the matrices which cannot be written as a direct sum of two matrices.

\begin{lemma}\label{samerow}
Let $C \in T_n(\mathbb{F}_q)$ be a matrix whose non-zero,  non-diagonal entries are either in the same row or same column, and the diagonal entries are the same. Then, for $q > k^{16}$ we can write $C=X^k + Y^k$ for some $X, Y \in T_n(\mathbb{F}_q)$.
\end{lemma}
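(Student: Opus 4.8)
The plan is to reduce the problem to the diagonal case via a substitution that makes the diagonal entries distinct, solve that case with Lemma~\ref{mult}, and then use the assumed two-solution property over $\mathbb{F}_q$ to absorb the off-diagonal entries.

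Here is the approach in more detail. Since $C$ has all diagonal entries equal, say to $\lambda$, and all non-diagonal non-zero entries lie in a single row or a single column, write $C = \lambda I + N$ where $N$ is a single ``arm'' of entries in some fixed row $i$ (or column $j$). I would look for $X, Y \in T_n(\mathbb{F}_q)$ of the form $X = D_1 + N_1$, $Y = D_2 + N_2$ where $D_1, D_2$ are diagonal and $N_1, N_2$ are supported on the same row (or column) as $N$. The idea is to choose the diagonal entries of $D_1$ and $D_2$ so that $X$ and $Y$ each have \emph{distinct} diagonal entries wherever it matters — concretely, I would pick $n$ pairs $(x_\ell, y_\ell) \in \mathbb{F}_q^2$ with $x_\ell^k + y_\ell^k = \lambda$, $x_\ell^k \neq x_m^k$ and $y_\ell^k \neq y_m^k$ for $\ell \neq m$, and set the $\ell$-th diagonal entry of $X$ to $x_\ell$ and of $Y$ to $y_\ell$. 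The assumed two-solution property over $\mathbb{F}_q$ (stated in the block before Lemma~\ref{samerow}), upgraded to $n$ solutions exactly as in Proposition~\ref{2var1} once $q > n^2 k^{16}$, guarantees such a choice; for the bare statement with $q > k^{16}$ one argues more carefully that only the two indices bounding the relevant arm need to differ.

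With the diagonals of $X$ arranged to be suitably distinct, $X^k$ has diagonal $x_\ell^k$ and, by the formula in the proof of Lemma~\ref{mult}, its $(r,s)$-entry (for the relevant arm) is $X_{rs}\, f(x_r, x_s) + (\text{lower terms})$, where $f(x_r,x_s) \neq 0$ by Lemma~\ref{homeqn} because $x_r^k \neq x_s^k$. Since the off-diagonal support of $X$ is confined to one row or column, there are no ``lower terms'' obstructing a triangular back-substitution: one solves for each $X_{rs}$ in turn, and likewise for $Y_{rs}$, so that $(X^k + Y^k)_{rs} = c_{rs}$ for every off-diagonal position while $(X^k+Y^k)_{\ell\ell} = x_\ell^k + y_\ell^k = \lambda = c_{\ell\ell}$. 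Concretely, once the diagonals are fixed, set $X_{rs} = c_{rs}\, f(x_r,x_s)^{-1}$ and $Y_{rs} = 0$ along the arm (or split $c_{rs}$ between the two), using that the single-arm support kills all cross terms $X_{ra}X_{as}$ — this is the content of the $a_{rs}a_{st}=0$ hypothesis in Lemma~\ref{zero} applied to the arm.

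The main obstacle is the bookkeeping around the bound $q > k^{16}$ versus $q > n^2 k^{16}$: the clean argument via Proposition~\ref{2var1} wants $n$ mutually $k$-distinct solution pairs, which costs $q > n^2k^{16}$, but the statement only assumes the two-solution property. The resolution is that for a single arm in row $i$ (ending at column $j$), the only diagonal positions whose $k$-th powers must differ are $i$ and $j$ (and positions strictly between them that actually carry a non-zero entry of $C$) — so in fact two solution pairs $(x_1,y_1), (x_2,y_2)$ with $x_1^k \neq x_2^k$, $y_1^k \neq y_2^k$ suffice: put $x_1$ at all the ``tail'' positions and $x_2$ at position $j$ (or vice versa), ensuring $f(x_r,x_s)\neq 0$ exactly where needed, which is why the hypothesis is phrased with just two solutions and the bound is $q > k^{16}$. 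I would make this dichotomy explicit: if every off-diagonal entry is in one row, use the row index; if in one column, use the column index; the argument is symmetric under transpose.
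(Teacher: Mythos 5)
Your proposal is correct and follows essentially the same route as the paper: write $C=A+B$ with $B$ diagonal and $A$ carrying the arm, use two solution pairs $(x_1,y_1),(x_2,y_2)$ of $x^k+y^k=\lambda$ with $x_1^k\neq x_2^k$, $y_1^k\neq y_2^k$ (one value at the arm's row/column index, the other everywhere else), and then solve for the off-diagonal entries of the $k$-th root using $f(x_r,x_s)\neq 0$ and the vanishing of all products $a_{rs}a_{st}$ along the arm, exactly as in Lemmas~\ref{power} and~\ref{mult}. Your closing remark correctly identifies why two pairs (hence $q>k^{16}$) suffice, matching the paper's use of Proposition~\ref{2var}; just note the distinctness is needed between the arm's index and \emph{every} column (resp.\ row) index carrying a non-zero entry, which your two-value assignment already ensures.
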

\begin{proof} Without loss of generality, let us assume that all non-zero,  non-diagonal entries of $C$ are in the first row. Let there are $m$ such entries, say $c_{1j_1}, c_{1j_2}, \ldots, c_{1j_m}$ such that $j_1 < j_2< \ldots < j_m$. As all the diagonal entries are same, therefore by Remark \ref{solnn}, for every $q > 4k^{16}$, there exist at least two solutions $(x_1,y_1), (x_2,y_2) \in \mathbb F_q^2$ such that we can write $c_{ii}=x_{1}^k + {y}_{1}^k = x_{2}^k+{y}_{2}^k$,  such that $x_{1}^k \neq x_{2}^k$ and ${y}_{1}^k \neq {y}_{2}^k$, for every $1 \leq i\leq n $.  We will write $c_{11} = x_{1}^k + {y}_{1}^k$ and $c_{ii}=x_{2}^k+{y}_{2}^k$, where $i \neq 1$. Let $A$ be a matrix having $a_{11}=x_1^k$, $a_{ii}=x_2^k$ and $a_{ij}=c_{ij}$. Similarly let $B$ be a diagonal matrix having  $b_{11}=y_1^k$ and $b_{ii}=y_2^k$. It is clear that $C=A+B$. Since $a_{rs}a_{st}=0=b_{rs}b_{st}$ for all $r<s<t$, and hence by Lemma \ref{power} $A$ and $B$ are $k$-th powers. Thus we are done.   Similarly, we can prove the result when non-zero non-diagonal entries are in the same column.       
\end{proof}

\begin{lemma}\label{norow}
Let $C \in T_n(\mathbb{F}_q)$ be a matrix whose non-zero, non-diagonal entries are in different rows and different columns, and all the diagonal entries are the same. Then for all  $q > k^{16}$, we have $C= A^k + B^k$ for some $A, B \in T_n(\mathbb{F}_q)$. 
\end{lemma}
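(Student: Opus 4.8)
The plan is to split $C = A + B$ with $A,B \in T_n(\mathbb{F}_q)$ arranged so that Lemma~\ref{power} applies to each, giving $A = X^k$ and $B = Y^k$. Write $c$ for the common diagonal entry of $C$ and use the running assumption of this section (equivalently Proposition~\ref{2var}, which applies since $q > k^{16}$) to fix two solutions $(x_1,y_1),(x_2,y_2)$ of $X^k + Y^k = c$ with $x_1^k \neq x_2^k$ and $y_1^k \neq y_2^k$. Next I would read off the combinatorial shape of the support of $C$: orienting each nonzero off-diagonal entry $c_{ij}\neq 0$ (with $i<j$) as an arc $i \to j$, the hypothesis that the nonzero off-diagonal entries lie in distinct rows and distinct columns says exactly that every index is the tail of at most one arc and the head of at most one arc. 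Since arcs strictly increase the index there are no cycles, so the arc set is a disjoint union of paths; in particular two nonzero off-diagonal entries $c_{rs}, c_{st}$ with $r<s<t$ occur only when $s$ is an internal vertex of such a path.

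Then I would perform two independent two-colourings along each path. First, colour the \emph{arcs} of each path alternately black and white, so that consecutive arcs receive different colours. Second, assign to each \emph{vertex} a value in $\{x_1^k, x_2^k\}$ so that adjacent vertices on a path receive different values (a path is bipartite), and give every index lying on no arc the value $x_1^k$. Now define $A$ by putting $a_{ii}$ equal to the value assigned to $i$, $a_{ij} = c_{ij}$ when the arc $i\to j$ is black, and $a_{ij}=0$ otherwise; and define $B$ by putting $b_{ii} = y_1^k$ if $a_{ii} = x_1^k$ and $b_{ii}=y_2^k$ if $a_{ii}=x_2^k$, $b_{ij}=c_{ij}$ when the arc $i\to j$ is white, and $b_{ij}=0$ otherwise. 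Since the diagonals add to $c$ and each off-diagonal entry of $C$ lands in exactly one of $A$, $B$, we get $A + B = C$.

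It remains to check the hypotheses of Lemma~\ref{power} for $A$ (and symmetrically for $B$): the diagonal entries of $A$ are the $k$-th powers $x_1^k$ or $x_2^k$; if $a_{rs} \neq 0 \neq a_{st}$ with $r<s<t$ then the arcs $r\to s$ and $s\to t$ would be two consecutive black arcs, contradicting the alternating colouring, so $a_{rs}a_{st}=0$; and if $a_{ij}\neq 0$ then $i\to j$ is an arc, hence $i,j$ are adjacent on a path and the vertex colouring forces $a_{ii}\neq a_{jj}$. Thus Lemma~\ref{power} gives $A = X^k$, and likewise $B = Y^k$, so $C = X^k + Y^k$.

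The main obstacle is conceptual rather than computational: one cannot simply put all off-diagonal entries into a single matrix and apply Lemma~\ref{power}, because its hypothesis ``$c_{rs}c_{st}=0$'' fails precisely on the chains $i\to j\to l$ that the present hypothesis permits (for instance $c_{12}, c_{23}\neq 0$ is allowed here). The content of the argument is that the support is a disjoint union of paths, that an alternating edge-colouring of a path breaks every such chain, and that a proper vertex-colouring of a path simultaneously separates the endpoints of every surviving arc — all of which costs only two colours, matching the two diagonal decompositions of $c$ available once $q > k^{16}$.
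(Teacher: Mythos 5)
Your proof is correct and follows essentially the same route as the paper: split the common diagonal entry using the two decompositions $c=x_1^k+y_1^k=x_2^k+y_2^k$, distribute the off-diagonal entries alternately along the chains $i\to j\to l$ so that neither summand contains two consecutive nonzero entries, and apply Lemma~\ref{power} to each of $A$ and $B$. Your alternating edge-colouring and proper $2$-vertex-colouring of the paths is just a cleaner packaging of the paper's explicit index bookkeeping for the same construction.
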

\begin{proof}
By Proposition~\ref{2var} we know that for all $q>k^{16}$, there exist $(x_1, y_1), (x_2, y_2) \in \mathbb F_q^2$ such that we can write $c_{ii}=x_1^k + y_1^k = x_2^k + y_2^k$ and $x_1^k \neq x_2^k$, $y_1^k \neq y_2^k$. Let $C$ be a matrix in $T_n(\mathbb F_q)$ such that $c_{i_1j_1}, \ldots, c_{i_m j_m}$ are non diagonal and non zero entries, where $i_1\leq  \ldots \leq i_m$. Hence by given hypothesis, we get $i_r \neq i_s$, $j_r \neq j_s$, $1 \leq r,s \leq m$. Now first  assume that $c_{rs}c_{st}=0$ for all $r<s<t$. Then clearly $j_r \neq i_s $, for all $1 \leq r \neq s \leq m$. Hence for every $c_{i_rj_r} \neq 0$, we write $c_{i_ri_r}=x_1^k + y_1^k$ and $c_{j_rj_r} = x_2^k + y_2^k$. For the rest of the diagonal entries, we can write $c_{ii} = x_1^k + y_1^k$. As we wrote down every $c_{ii}$ in such a way that it satisfies the condition if $c_{ij}\neq 0$, then $c_{ii} = x_r^k + y_r^k$ and $c_{jj}=x_s^k + y_s^k$ such that $x_r^k \neq x_s^k$ and $y_r^k \neq y_s^k$, where $1 \leq r \neq s \leq 2$. Now, let $A $ be the matrix with diagonal entries $a_{ii}=x_r^k$ and $a_{ij}=c_{ij}$, and $B$ be a diagonal matrix having $b_{ii}=y_{r}$ such that $c_{ii} = x_r^k + y_r^k$, which also satisfies the above-mentioned condition. It is clear that $B$ is $k$-th power in $T_n(\mathbb F_q)$.  As diagonal entries of $A$ are $k$-th powers and $a_{rs}a_{st}=0$ for all $r <s<t$, moreover if $a_{ij} \neq 0$, then $a_{ii} \neq a_{jj}$, therefore by Lemma \ref{power}, we get that $A$ is a $k$-th power. Thus $C = A + B$, where $A$ and $B$ are $k$-th powers in $T_n(\mathbb F_q)$.

Now let us assume that $c_{rs}c_{st} \neq 0$ for some $r <s<t$. Let $$ \{ c_{i_1j_1}, \ldots, c_{i_{r_1} j_{r_1}},c_{i_{r_1+1}j_{r_1+1}}, \ldots, c_{i_{r_1+r_2}j_{r_1+ r_2}}, \ldots, c_{i_{r_1+ \ldots r_{l-1} +1}j_{r_1 + \ldots r_{l-1}+1}}, \ldots, c_{i_{r_1 + \ldots +r_l}j_{r_1+ \ldots +r_l}} \}, $$
 be the ordered set of non-diagonal non zero entries of $C$ such that $j_{r-1} = i_r$ for all $2 \leq r \leq r_1$, \ $r_1+2 \leq r \leq r_1+r_2$, \  \ldots, \  $r_1+ \ldots +r_{l-1}+2 \leq r \leq r_1+\ldots+r_l$.  Now let us write 
\begin{eqnarray*}
     c_{i_1i_1} &=& x_1^k+y_1^k = c_{i_{r_1+1}i_{r_1+1}}= \ldots= c_{i_{r_1+ \ldots r_{l-1} +1}i_{r_1 + \ldots r_{l-1}+1}},\\
     c_{j_{m}j_{m}} &=& \begin{cases} 
   x_1^k+y_1^k & \text{if $1 \leq m \leq r_1$ is even},\\ 
   x_2^k+y_2^k & \text{if $1 \leq m \leq r_1$ is odd},
\end{cases}\\
\vdots \\
c_{j_{r_1+\ldots+r_{l-1}+m}j_{r_1+\ldots+r_{l-1}+m}} &=& \begin{cases} 
   x_1^k+y_1^k & \text{if $1 \leq m \leq r_l$ is even},\\ 
   x_2^k+y_2^k & \text{if $1 \leq m \leq r_l$ is odd}
\end{cases}\\
 \end{eqnarray*}
and we write rest of the $c_{ii}$ as $x_1^k + y_1^k$. Now let $A $ and $B$ be  matrices such that 
\begin{eqnarray*}
a_{i_1i_1} &=& x_1^k= a_{i_{r_1+1}i_{r_1+1}}= \ldots= a_{i_{r_1+ \ldots r_{l-1} +1}i_{r_1 + \ldots r_{l-1}+1}},\\
b_{i_1i_1} &=&  y_1^k=b_{i_{r_1+1}i_{r_1+1}}= \ldots= b_{i_{r_1+ \ldots r_{l-1} +1}i_{r_1 + \ldots r_{l-1}+1}},\\
a_{j_{m}j_{m}} &=& \begin{cases} 
x_1^k &  \hspace{1cm} \text{if $1 \leq m \leq r_1$ is even},\\ 
   x_2^k & \hspace{1cm} \text{if $1 \leq m \leq r_1$ is odd},
\end{cases}\\ 
a_{i_{m}j_{m}} &=&  \begin{cases} 
0 & \hspace{.5cm} \text{if $1 \leq m \leq r_1$ is even},\\ 
c_{i_{m} j_{m}}  & \hspace{.5cm} \text{if $1 \leq m \leq r_1$ is odd},
\end{cases}\\
b_{i_mj_m} &=& \begin{cases}
c_{i_mj_m} & \hspace{.4cm} \text{ if $1 \leq m \leq r_1$ is even},\\
0 & \hspace{.5cm} \text{if $1 \leq m \leq r_1$ is odd},
\end{cases}\\
\vdots \\
a_{j_{r_1+\ldots+r_{l-1}+m}j_{r_1+\ldots+r_{l-1}+m}} &=& \begin{cases} 
x_1^k & \hspace{3.6cm} \text{if $1 \leq m \leq r_l$ is even},\\ 
x_2^k & \hspace{3.6cm} \text{if $1 \leq m \leq r_l$ is odd},
\end{cases}\\ 
a_{i_{r_1+\ldots+r_{l-1}+m}j_{r_1+\ldots+r_{l-1}+m}} &=&  \begin{cases} 
0 & \text{if $1 \leq m \leq r_l$ is even},\\ 
c_{i_{r_1+\ldots+r_{l-1}+m}j_{r_1+\ldots+r_{l-1}+m}}  & \text{if $1 \leq m \leq r_l$ is odd},
\end{cases}\\
\end{eqnarray*}
\begin{eqnarray*}
b_{j_{r_1+\ldots+r_{l-1}+m}j_{r_1+\ldots+r_{l-1}+m}} &=& \begin{cases} 
y_1^k & \hspace{3.6cm} \text{if $1 \leq m \leq r_l$ is even},\\ 
y_2^k & \hspace{3.6cm} \text{if $1 \leq m \leq r_l$ is odd},
\end{cases}\\ 
b_{i_{r_1+\ldots+r_{l-1}+m}j_{r_1+\ldots+r_{l-1}+m}} &=&  \begin{cases} 
c_{i_{r_1+\ldots+r_{l-1}+m}j_{r_1+\ldots+r_{l-1}+m}} & \text{if $1 \leq m \leq r_l$ is even},\\ 
0  & \text{if $1 \leq m \leq r_l$ is odd},
\end{cases}\\
 \end{eqnarray*}
rest of $a_{ii}=x_1^k$ and $b_{ii}=y_1^k$. Note that we constructed $A$ and $B$ in such a way that, $a_{rs} a_{st} = 0 = b_{rs} b_{st}$ for all $r<s<t$. Also, if $a_{ij} \neq 0$, then $a_{ii} \neq a_{jj}$ and similarly if $b_{ij}\neq 0$, then $b_{ii} \neq b_{jj}$. Thus $A$ and $B$ satisfy the conditions of Lemma \ref{power}, which implies $A$ and $B$ are $k$-th powers. By our construction, it is clear that $C=A+B$. This completes the proof.
\end{proof}

%As $c_{i_1j_1} \neq 0$, therefore we write $c_{i_1i_1} = x_1^k+y_1^k$ and  $c_{j_1j_1} = x_2^k+y_2^k$. If $j_1=i_r$ for some $1 \leq r \leq t$, then $c_{i_1j_1}c_{i_rj_r}\neq 0$, and hence $c_{i_ri_r}=x_2^k+y_2^k$ and we write $c_{j_rj_r}=x_1^k+y_1^k$. Now again if 

Other than the above-mentioned classes, we present the expression of indecomposable matrices in Table \ref{table1},\ref{table2} and \ref{table3} for $n \leq 6$, in which every matrix is written as a sum of two $k$-th  powers. To write a matrix $C$ into sum of two $k$-th  powers, we write $C = A + B$  such that  $c_{ii} = x_{r}^k + y_{r}^k$, where $x_1^k \neq x_2^k$ and $y_1^k \neq y_2^k$, $1 \leq i \leq n$, $r=1,2$ and diagonal entries of $A$ and $B$ are $x_{r}$ and $y_{r}$ respectively. Now, $a_{ij} = c_{ij}$ or $b_{ij} = c_{ij}$ for $i \neq j$ such that $a_{rs}a_{st} = 0 = b_{rs}b_{st}$ for $r < s < t$, and  if $a_{ij} \neq  0$,  then $a_{ii}^k \neq a_{jj}^k$ and similarly if $b_{ij} \neq  0$,  then $b_{ii}^k \neq b_{jj}^k$. This technique works on matrices up to size $6 \times 6$. We used this technique to write expressions in Table \ref{table1},\ref{table2} and \ref{table3} for $n \leq 6$. But this technique does not apply on a matrix of size $7 \times 7$, which is given by, 

 \[ C= \begin{bmatrix}
    0 & 1 & 1 & 0 & 0 & 0 & 0\\
     & 0 & 0 & 0 & 0 & 1 & 0\\
     &  & 0 & 1 & 0 & 0  & 0\\    
     &  &  & 0 & 1 & 1  & 0\\
     &  &  &  & 0 & 0  & 0\\
     &  &  &  &  & 0  & 1\\
     &  &  &  &  &   & 0\\
\end{bmatrix}. \]
If we try to write $C = A^k + B^k$ using above mentioned technique, then as $c_{12}, c_{34}, c_{45}$ and $c_{67}$ are non zero, therefore by Lemma \ref{2*2}  we get that $a_{11}^k \neq a_{22}^k, \ a_{33}^k\neq a_{44}^k, \ a_{44}^k \neq a_{55}^k$ and $a_{66}^k \neq a_{77}^k$. So the diagonal entries of $A$ can be equal to either of the following:
\begin{itemize}
\item[Case(i)] $(x_1^k, y_1^k, y_1^k, x_1^k, y_1^k, x_1^k, y_1^k)$,
\item[Case(ii)] $(x_1^k, y_1^k, x_1^k, y_1^k, x_1^k, x_1^k, y_1^k)$,
\item[Case(iii)] $(x_1^k, y_1^k, y_1^k, x_1^k, y_1^k, y_1^k, x_1^k)$,
\item[Case(iv)] $(x_1^k, y_1^k, x_1^k, y_1^k, x_1^k, y_1^k, x_1^k)$.
\end{itemize}
In either of these cases, we get either $a_{ij} \neq 0$ or $b_{ij} \neq 0$ such that $a_{ii}^k = a_{jj}^k$ or $b_{ii}^k = b_{jj}^k$ respectively for some $1\leq i,j \leq 7$. If we assume that for any $\alpha \in \mathbb{F}_q$ the equation $X_1^k + X_2^k = \alpha$ has two solutions in $\mathbb{F}_q^2$, say $(x_1, y_1)$, $(x_2, y_2)$ such that $x_1^k \neq x_2^k$ and $y_1^k \neq y_2^k$, then in $T_n(\mathbb{F}_q)$, $n \geq 7$, we start getting computational difficulties to write a matrix as the sum of two $k$-th powers. This also explains why the proof of our main Theorem given in the next section depends on both $k$ and $n$.

%%%%%%%%%%%
\section{Proof of main theorem}

Now we will prove the main result, Theorem~\ref{thma}, of the paper here.

\begin{proof}[\textbf{Proof of Theorem~\ref{thma}:}] Let $C =(c_{ij}) \in T_n(\mathbb{F}_q)$ be a matrix. We may assume $n>1$ and $k>1$. 

{\bf Proof of part 2: } Let us begin by showing the second part of the Theorem. 

First, let us assume that $C$ has only one eigenvalue, say $\lambda \in \mathbb F_q$. Then by Remark~\ref{solnn}, for all $q > n^2k^{16}$,  the equation $X_1^k+ X_2^k=\lambda$, has at least $n$ solutions  $(x_i,y_i) \in \mathbb{F}_q$, $1\leq i \leq n$, such that $x_i^k \neq x_j^k$ and $y_i^k \neq y_j^k$ for any $i\neq j$. Let $A$ and $B$ be  matrices in $T_n(\mathbb F_q)$ such that $a_{ii} = x_i^k$ and $b_{ii} = y_i^k$ respectively,  $a_{ij} = c_{ij} $ and $b_{ij}=0$, for all $i \neq j$. Then, by Lemma~\ref{mult}, matrices $A$ and $B$ are $k$-th powers. Hence $C = A + B$, where $A$ is a diagonalizable matrix and $B$ is a diagonal matrix and we are done. 

Now, if $C$ has all eigenvalues distinct, say $\lambda_i \in \mathbb F_q$, $1 \leq i \leq n$, then by Corollary~\ref{dia}, $C$ is conjugate to a diagonal matrix. Hence, we may assume $C$ is a diagonal matrix. By Remark~\ref{zero2}, for all $q > n^2k^{16}$ the equations $X_i^k + Y_i^k = \lambda_i$ have at least $n$ solutions, say $(x_i, y_i)$. If we take diagonal matrix $A$ and $B$ such that $a_{ii} = x_i^k$ and $b_{ii} = y_i^k$, respectively, then clearly $A$ and $B$ are $k$-th powers, also $C = A + B$. Note that, if $C$ is a diagonal matrix, then only one solution of the equation $X_i^k + Y_i^k=\lambda_i$ will be enough for us. 

Finally, let us assume that $C$ has $m <n$ distinct eigenvalues, say $\lambda_i \in \mathbb F_q$, such that the characteristic polynomial of $C$ is given by 
$$(X-\lambda_1)^{l_1}\cdots (X-\lambda_m)^{l_m},$$
where $l_1 + l_2 + \ldots + l_m =n$. First we will get $n$ ways to write $c_{ii} = x_{ii}^k + y_{ii}^k$ for some $x_{ii}, y_{ii} \in \mathbb F_q$ such that $x_{ii}^k \neq x_{jj}^k$ and $y_{ii}^k \neq y_{jj}^k$ for all $1 \leq i \neq j \leq n$.
Consider the system of equations over $\mathbb F_q$, 
\begin{eqnarray*}
X_1^k + Y_1^k &=& \lambda_1,\\ &\vdots&  \\
X_m^k+Y_m^k&=&\lambda_m,
\end{eqnarray*}
where $\lambda_i$ are distinct elements in $\mathbb{F}_q$. By Remark~\ref{solnn}, for every $\lambda_i$, we have the following: for all $q > \mathcal{C}(k,n)> 4n^2k^{16}$, there exist at least $2n$ many solutions, say $(x_i, y_i)$, of  the equation $X_i^k +Y_i^k= \lambda_i$ such that $x_i^k \neq x_j^k$ and $y_i^k \neq y_j^k$, for $i \neq j$. Let the number of such solutions for each $\lambda_i$ be $t_i$, where $t_i \geq 2n$. Thus, we get at least  $t_i$, many disjoint $V_{ir}$'s as we defined in Lemma \ref{2var1} and Lemma \ref{2var2}. By choosing one element from each $V_{ir}$, we get a set  
$$\mathcal A_i=\{(a_{i1},b_{i1}), (a_{i2}, b_{i2}), \ldots, (a_{i,t_i}, b_{i,t_i}) \mid a_{ir}^k \neq a_{is}^k,  b_{ir}^k \neq b_{is}^k \ \text{ for any }\ r \neq s\},
$$ 
where $t_i \geq 2n$. This is a subset of $S_i$, the set of solutions of equation  $X_i^k +X_i^k= \lambda_i$. Now choose first $l_1$ elements from $\mathcal A_1$, and set $ \mathcal{M}_1 := \{(a_{11},b_{11} ), \ldots, (a_{1, l_1}, b_{1,l_1}) \}$. As $|\mathcal A_2|=t_2 \geq 2n$ and $t_2-2l_1 \geq 2n - 2l_1>l_2$, therefore  by Lemma \ref{2var2}, we can choose $l_2$ many elements from $\mathcal A_2$, say  $(a_{21}, b_{21}), \ldots, (a_{2,l_2}, b_{2,l_2})$, by rearranging the elements of $\mathcal A_2$. Now define a set 
$$\mathcal{M}_2 = \{(a_{11},b_{11} ), \ldots, (a_{1, l_1}, b_{1,l_1}) , (a_{21}, b_{21}), \ldots, (a_{2,l_2}, b_{2,l_2}) \}.$$
It is clear from Lemma~\ref{2var2} and by our choice of elements that $a_{1r}^k \neq a_{2s}^k$ and
$b_{1r}^k \neq b_{2s}^k$ for any $1 \leq r \leq l_1$ and $1 \leq s \leq l_2$. Since $|\mathcal A_i| =t_i$ and $ t_i - 2(l_1 + \ldots + l_{i-1}) \geq 2n - 2(l_1+\ldots+l_{i-1})>l_i$, for all $ 2 \leq i\leq m$, therefore by Lemma \ref{2var2}, we can choose $l_i$ many elements to  construct $\mathcal{M}_i$ for all $i\geq 2$. Hence continuing like this, we get $\mathcal{M}_m$, which is given by 
$$\{ (a_{11},b_{11} ), \ldots, (a_{1, l_1}, b_{1,l_1}) , (a_{21}, b_{21}), \ldots, (a_{2,l_2}, b_{2,l_2}), \ldots,(a_{m1}, b_{m1}), \ldots, (a_{m,l_m}, b_{m,l_m}) \},
$$
such that $a_{ir}^k \neq a_{js}^k$ and $b_{ir}^k \neq b_{js}^k$ for all $i \neq j$ and $ r \neq s$. It is clear that $a_{ir}^k + b_{ir}^k =\lambda_i$, for all $1 \leq i \leq m$ and $1 \leq r \leq t_i$. 
Thus we  get $n$ many ways to write $c_{ii} = x_{ii}^k + y_{ii}^k$ for some $x_{ii}, y_{ii} \in \mathbb F_q$ such that $x_{ii}^k \neq x_{jj}^k$ and $y_{ii}^k \neq y_{jj}^k$ for all $1 \leq i \neq j \leq n$, where $(x_{ii}, y_{ii})= (a_{jr}, b_{jr})$ for some $1 \leq j \leq m$ and $1 \leq r \leq t_j$. Now let $A $ and $B$ be matrices in $T_n(\mathbb F_q)$ such that $a_{ii} =x_{jr}^k$, $a_{ij}=c_{ij}$ and $b_{ii}=y_{jr}^k$, $b_{ij}=0$, $x_{jr}^k +y_{jr}^k=\lambda_j$. It is clear that $C= A + B$. As diagonal entries of $A$ and $B$ are distinct $k$-th powers, therefore by Lemma \ref{mult} $A$ and $B$ are $k$-th powers.
Hence $C$ is sum of two $k$-th powers, whenever $q > 4n^2k^{16}$. 

By looking at the various constants in all of the cases above, we note that $C$ can be written as a sum of two $k$-th powers whenever $q > 4n^2k^{16}$. 

{\bf Proof of part 1: }
Now we will prove the first part of the theorem. Let $C$ be any matrix in $T_n(\mathbb F_q)$. If all diagonals enteries of $C$ are same, say $\lambda$, then by Corollary \ref{nvar1}, we get that for all $q> n^2k^{16}$, there exist at least $n $ many tuples $(x_i, y_i, z_i)$ such that $x_i^k +y_i^k +z_i^k = \lambda$ and $x_i^k \neq x_j^k$, $y_i^k \neq y_j^k$ for all $i \neq j$. Now consider  the matrix $A$  having $a_{ii} = x_i^k$, $a_{ij} = c_{ij}$, and $B, D$ are diagonal matrices with $b_{ii}=y_i^k$ and $d_{ii}=z_i^k$ respectively. Then, by Lemma \ref{mult}, we see that $A$ is a $k$-th power. Thus $C = A + B + D$, where $A,B$ and $D$ are $k$-th powers. 

If all diagonal entries of $C$ are distinct, then by Corollary \ref{nvar2}, we get $n$ many tuples $(x_i, y_i, z_i)$ such that $x_i^k +y_i^k +z_i^k = \lambda_i$ and $x_i^k \neq x_j^k$, $y_i^k \neq y_j^k$ for all $i \neq j$. Again by the same process, we can choose $A, B$ and $D$ such that $C = A+B+D$, where $A, B$ and $D$ are $k$-th powers. 

Now let us assume that, $C$ has $m <n$ distinct eigenvalues, say $\lambda_i \in \mathbb F_q$, such that the characteristic polynomial of $C$ is given by 
$$(X-\lambda_1)^{l_1}\cdots (X-\lambda_m)^{l_m},
$$
where $l_1+l_2+ \ldots + l_m =n$. First we will get $n$ many ways to write $c_{ii} = x_{ii}^k + y_{ii}^k$ for some $x_{ii}, y_{ii} \in \mathbb F_q$ such that $x_{ii}^k \neq x_{jj}^k$ and $y_{ii}^k \neq y_{jj}^k$ for all $1 \leq i \neq j \leq n$.
This can be done using Corollary \ref{nvar2} and a similar process, which we had done in the proof of the second part above. Thus for all $q>4n^2k^{16}$, we can write $C$ as a sum of three $k$-th powers. Hence we are done.

\end{proof}

We end this section with a brief discussion as to why some of the methods used for $M_n(\mathbb F_q)$ may not work here. 
Let $n\geq 1$ be a positive integer and $\lambda \in \mathbb{F}$. Let $J_{\lambda, n}$ denote the Jordan block matrix given as follows:
\[
\begin{bmatrix}
\lambda & 1 & 0 & \cdots & 0 & 0 \\
0 & \lambda & 1 & \cdots & 0 & 0 \\
0 & 0 & \lambda & \cdots & 0 & 0 \\
\vdots  & \vdots &\vdots & \ddots & \vdots & \vdots  \\
0 & 0 & 0 & \cdots & \lambda & 1 \\
0 & 0 & 0& \cdots & 0 & \lambda 
\end{bmatrix}.
\]

Let us begin with a discussion about powers and conjugation in $M_n(\mathbb{F}_q)$, see \cite[Lemma 5.1, 5.3]{KK22} of nilpotent matrices and can the same be carried forward in $T_n(\mathbb F_q)$. 
\begin{lemma}\label{J1}
Let $n,k \geq 1$ be positive integers, and suppose $n>k$. Let $m$ be $n$ congruent modulo $k$ with $0 \leq m \leq k-1$. Then the $k$-th   power $J_{0,n}^k$ is conjugate to $$\big( \bigoplus_{k-m} J_{0, \lfloor \frac{n}{k} \rfloor } \big) \bigoplus  \big( \bigoplus_{m} J_{0, \lceil \frac{n}{k} \rceil } \big).  $$ 
\end{lemma}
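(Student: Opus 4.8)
The plan is to compute the Jordan form of $J_{0,n}^k$ directly by analysing the nilpotent operator $N^k$, where $N = J_{0,n}$ acts on $V = \mathbb{F}_q^n$ as the shift $e_i \mapsto e_{i-1}$ (with $e_0 = 0$). The Jordan type of a nilpotent operator is completely determined by the ranks of its powers, so it suffices to compute $\operatorname{rank}((N^k)^j) = \operatorname{rank}(N^{kj})$ for all $j \geq 0$ and then read off the block sizes, or — more transparently — to exhibit an explicit decomposition of $V$ into $N^k$-cyclic subspaces.

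First I would set up the explicit cyclic decomposition. Write $n = k\lfloor n/k\rfloor + m$ with $0 \le m \le k-1$. For each residue class $r \in \{1, \dots, k\}$, consider the subspace $W_r$ spanned by those $e_i$ with $i \equiv r \pmod k$; then $N^k$ maps $W_r$ into itself (it is again a shift, by $k$ steps), and $V = \bigoplus_{r=1}^k W_r$ as $N^k$-modules. Counting indices $i \in \{1,\dots,n\}$ in each residue class $r$ shows that $\dim W_r = \lceil n/k\rceil$ for exactly $m$ values of $r$ and $\dim W_r = \lfloor n/k\rfloor$ for the remaining $k - m$ values (one checks that the "long" classes are precisely $r \in \{1, \dots, m\}$ when one uses representatives $1,\dots,n$, but the exact labelling is irrelevant). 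On each $W_r$, the restriction of $N^k$ is the single-Jordan-block nilpotent shift of size $\dim W_r$, since $e_r, e_{r+k}, e_{r+2k}, \dots$ is a single Jordan string. Hence $N^k$ is conjugate to $\big(\bigoplus_{k-m} J_{0,\lfloor n/k\rfloor}\big) \oplus \big(\bigoplus_{m} J_{0,\lceil n/k\rceil}\big)$, which is the claim.

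As a cross-check (and an alternative route if one prefers rank computations), I would verify $\operatorname{rank}(N^{kj}) = \max(n - kj, 0)$, so that the number of Jordan blocks of $N^k$ of size $\ge j$ equals $\operatorname{rank}(N^{k(j-1)}) - \operatorname{rank}(N^{kj})$, which is $k$ for $j \le \lfloor n/k\rfloor$ and $m$ for $j = \lceil n/k\rceil$ when $m > 0$ (and $0$ beyond); this yields $k - m$ blocks of size exactly $\lfloor n/k\rfloor$ and $m$ blocks of size $\lceil n/k\rceil$, matching the above.

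The only genuine obstacle is bookkeeping: making sure the index counts per residue class are correct when $m \neq 0$ (i.e. that exactly $m$ classes have the larger size), and handling the edge case $k \mid n$ (where $m = 0$ and all blocks have size $n/k$) uniformly. None of this is deep; the decomposition $V = \bigoplus_r W_r$ does all the real work, and the hypothesis $n > k$ only ensures $\lfloor n/k\rfloor \ge 1$ so that the statement is non-vacuous.
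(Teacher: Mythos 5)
Your argument is correct. The paper itself does not prove this lemma: it is quoted from \cite[Lemma 5.1, 5.3]{KK22}, so there is no in-paper proof to compare against; your residue-class decomposition $V=\bigoplus_{r=1}^{k}W_r$, with $N^k$ acting as a single shift on each $W_r$ and the count of $m$ classes of size $\lceil n/k\rceil$ versus $k-m$ of size $\lfloor n/k\rfloor$, is the standard argument and correctly establishes the claim (the conjugating matrix is just a permutation of the basis, so conjugacy holds over $\mathbb{F}_q$, and the rank check $\operatorname{rank}(N^{kj})=\max(n-kj,0)$ confirms the block multiplicities). Note the conjugacy here is in $GL_n(\mathbb{F}_q)$, not $B_n$-conjugacy, which is exactly why the paper goes on to observe that the statement fails inside $T_n(\mathbb{F}_q)$.
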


\begin{defn}
For a positive integer $n \geq 1$, the $r$-tuple $(n_1, \ldots, n_r)$, for $r \geq 1$, is a partition of $n$ if $1 \leq n_1 \leq \ldots \leq n_r$ and $n=n_1 + \ldots + n_r$. The coordinates $n_i$ are called parts of the partition. The Junction matrix associated to the partition $(n_1, \ldots, n_r)$ is the matrix
$$\mathfrak{J}_{(n_1, \ldots, n_r)}= E_{n_1, n_1+1} + E_{(n_1 +n_2),(n_1 +n_2+1)} + \cdots + E_{(n1 +\ldots + n_{r-1}),(n_1 + \ldots + n_{r-1}+1)}.$$ 
\end{defn} 

For example, corresponding to the partition $(1,1,1,2)$ and $(1,2,2)$ of $5$, the junction matrices are,

\[
\begin{bmatrix}
0 & 1 & 0 & 0 & 0  \\
0 & 0 & 1 & 0 & 0  \\
0 & 0 & 0 & 1 & 0  \\
0 & 0 & 0 & 0 & 0  \\
0 & 0 & 0 & 0 & 0  
\end{bmatrix}, 
\begin{bmatrix}
0 & 1 & 0 & 0 & 0  \\
0 & 0 & 0 & 0 & 0  \\
0 & 0 & 0 & 1 & 0  \\
0 & 0 & 0 & 0 & 0  \\
0 & 0 & 0 & 0 & 0  
\end{bmatrix}. 
\]

\begin{lemma}\label{J2}
For a positive integer $k \geq 1$, let $(n_1, \ldots, n_k)$ be a partition of $n$. Suppose $n \geq 2k$ and $n_i \geq 2$ for all $1 \leq i \leq r$. Then $\mathfrak{J}_{(n_1, \ldots, n_k)}$ is a $k$-th   power.
\end{lemma}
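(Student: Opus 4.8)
The plan is to compute the similarity class of the nilpotent matrix $\mathfrak{J}:=\mathfrak{J}_{(n_1,\ldots,n_k)}$ and then, using Lemma~\ref{J1}, to recognize it as the class of a $k$-th power. Working up to conjugacy in $M_n(\mathbb{F}_q)$ is harmless here: if $\mathfrak{J}=PA^kP^{-1}$ then $\mathfrak{J}=(PAP^{-1})^k$ is a $k$-th power; and the Jordan (rational) canonical form of a nilpotent matrix exists over any field (its only eigenvalue being $0$), so everything below is valid over $\mathbb{F}_q$. For $k=1$ there is nothing to prove, since then the defining sum is empty and $\mathfrak{J}=0=0^1$; so assume $k\ge 2$.

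\emph{Step 1: the Jordan type of $\mathfrak{J}$.} Put $s_i=n_1+\cdots+n_i$ for $1\le i\le k-1$, so that the only nonzero entries of $\mathfrak{J}$ are the $1$'s in positions $(s_i,s_i+1)$. Since each $n_i\ge 2$ we have $s_{i+1}-s_i=n_{i+1}\ge 2$, hence $s_i+1<s_{i+1}$; thus the index pairs $\{s_i,s_i+1\}$, $1\le i\le k-1$, are pairwise disjoint, and $s_{k-1}+1=n-n_k+1\le n-1$, so they all lie in $[n]$. Consequently the graph $G_{\mathfrak{J}}$ is a disjoint union of $k-1$ arcs together with $n-2(k-1)$ isolated vertices (and $n-2(k-1)\ge 2$ as $n\ge 2k$); after a permutation conjugation, $\mathfrak{J}$ is similar to $\big(\bigoplus_{k-1}J_{0,2}\big)\oplus\big(\bigoplus_{n-2k+2}J_{0,1}\big)$.

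\emph{Step 2: a $k$-th power in this class.} Set $A=J_{0,2k-1}\oplus\big(\bigoplus_{n-2k+1}J_{0,1}\big)\in M_n(\mathbb{F}_q)$, which has size $n$ because $n\ge 2k$. Since $2k-1>k$ and $2k-1=1\cdot k+(k-1)$, Lemma~\ref{J1} applied to $J_{0,2k-1}$ gives that $J_{0,2k-1}^{\,k}$ is conjugate to $J_{0,1}\oplus\big(\bigoplus_{k-1}J_{0,2}\big)$ (here $\lfloor (2k-1)/k\rfloor=1$ and $\lceil (2k-1)/k\rceil=2$). Hence
\[
A^k\;=\;J_{0,2k-1}^{\,k}\ \oplus\ \bigoplus_{n-2k+1}0\;\sim\;\Big(\bigoplus_{k-1}J_{0,2}\Big)\ \oplus\ \Big(\bigoplus_{n-2k+2}J_{0,1}\Big),
\]
which is exactly the Jordan type found in Step~1. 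Therefore $\mathfrak{J}$ is conjugate to $A^k$, and so $\mathfrak{J}_{(n_1,\ldots,n_k)}$ is a $k$-th power.

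\emph{Where the difficulty lies.} There is no serious obstacle once Lemma~\ref{J1} is available; the one point to get right is the bookkeeping of Step~1, namely that the hypothesis $n_i\ge 2$ forces the superdiagonal $1$'s of $\mathfrak{J}$ to occupy disjoint $2\times 2$ corners, so that its Jordan type has exactly $k-1$ blocks of size $2$. This count matches precisely the output of Lemma~\ref{J1} for the single block $J_{0,2k-1}$ — a partition of $n$ into $k$ parts has $k-1$ junctions, while $J_{0,2k-1}^{\,k}$ splits into $k-1$ blocks of size $2$ and one of size $1$ — which is the numerical coincidence making the statement true.
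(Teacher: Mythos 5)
Your argument is correct. A point of context: the paper itself gives no proof of this lemma — it is quoted (together with Lemma~\ref{J1}) from \cite{KK22} purely as part of a discussion of why such $M_n(\mathbb F_q)$-techniques do not transfer to $T_n(\mathbb F_q)$ — so there is no in-paper proof to compare against; your reading of ``$k$-th power'' as a $k$-th power in $M_n(\mathbb F_q)$, with conjugacy taken in $\mathrm{GL}_n$, is the right one, consistent with the paper's subsequent remark that the statement fails under $B_n$-similarity in $T_n(\mathbb F_q)$. Your two steps check out: since $n_i\ge 2$, the $k-1$ superdiagonal entries of $\mathfrak{J}_{(n_1,\ldots,n_k)}$ sit in pairwise disjoint index pairs $\{s_i,s_i+1\}$, so $\mathfrak{J}$ is permutation-similar to $\bigl(\bigoplus_{k-1}J_{0,2}\bigr)\oplus\bigl(\bigoplus_{n-2k+2}J_{0,1}\bigr)$ (equivalently, $\mathfrak{J}^2=0$ with rank $k-1$); and Lemma~\ref{J1} applied to $J_{0,2k-1}$ with $m=k-1$ gives exactly the same Jordan type for $A^k$ where $A=J_{0,2k-1}\oplus\bigl(\bigoplus_{n-2k+1}J_{0,1}\bigr)$, which fits inside size $n$ because $n\ge 2k$. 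Since nilpotent matrices with equal Jordan type are conjugate over any field, $\mathfrak{J}=(PAP^{-1})^k$, as required; the $k=1$ case is trivial as you note.
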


The Lemma \ref{J1} and Lemma \ref{J2}  need not hold true in $T_n(\mathbb{F}_q)$.  
If Lemma \ref{J1} is true in $T_n(\mathbb{F}_q)$, then $J_{(0,4)}^2 $ is conjugate to $J_{(0,2)} \oplus J_{(0,2)}$. But it can be easily checked that  $J_{(0,4)}^2 $ is not $B_n$-conjugate to $J_{(0,2)} \oplus J_{(0,2)}$. Also if Lemma \ref{J2} holds true in $T_n(\mathbb{F}_q)$, then for partition $(2,2)$ of $4$, we have $\mathfrak{J}_{(2,2)}$ is square of a matrix in $T_4(\mathbb{F}_q)$. But $\mathfrak{J}_{(0,2)}$ is not a square in $T_4(\mathbb{F}_q)$. 

%%%%%%%%%%%%
%\newpage
%%%%%%%%%%%%%%%%%%
\section{Tables}\label{section-table}
In this section, we will write indecomposable Bellitski\v{i}'s canonical forms of nilpotent matrices up to $6 \times 6$ as a sum of two $k$-th   powers. In the following tables, $E_{rs}$ denote the elementary matrix such that the $(r,s)$ entry is $1$ and elsewhere $0$. Here $D=\diag(d_{ii})$ and $D'=\diag (d_{jj}')$ are diagonal matrices with diagonal entries $x_i^k$ and $y_i^k$ respectively such that $x_i^k + y_i^k = 0$, where $x_i, y_i \in \mathbb{F}_q$, $ i \in \{1, 2\}$ and $k$ be any positive integer. In the following table, $A=D+\sum E_{ij}$ and $B=D'+\sum E_{rs}$, where $A$ and $B$ are $k$-th  powers by Lemma \ref{power}.

\FloatBarrier
\begin{table}[ht]
\centering
\begin{tabular}{|c|c|c|}
\hline
Presentation &  \multicolumn{1}{|m{6cm}|}{\centering $A+B$ }  & \multicolumn{1}{|m{5cm}|}{\centering Diagonal } \\ 
\hline
&&\\
$123$ & $(D+E_{12})+(D'+E_{23})$ &  $d_{11}=d_{33}=x_1^k$, $d_{22}=x_2^k$   \\
\hline
&&\\
$1234$ & $(D+E_{12}+E_{34})+$ &  $d_{11}=d_{33}=x_1^k$,   \\
&&\\
 & $(D'+E_{23})$ &   $d_{22}=d_{44}=x_2^k$   \\
\hline
&&\\
$12|34:13$ & $(D+E_{12}+E_{34})$+ &  $d_{11}=d_{44}=x_1^k$,   \\
&&\\
 & $(D'+E_{13})$ &   $d_{22}=d_{33}=x_2^k$   \\
\hline
&&\\
$12345$ &   $(D+ E_{12}+ E_{34})$ +   &  $d_{11}=d_{33}=d_{55}=x_1^k$,   \\
&&\\
 &   $(D'+E_{23} + E_{45})$   &   $d_{22}=d_{44}=x_2^k$   \\
\hline
&&\\
$12|345:13$ & $(D+E_{12}+E_{13}+E_{45})$+  &    $d_{11}=d_{44}=x_1^k$  \\
&&\\
 &   $(D'+E_{34})$  &    $d_{22}=d_{33}=d_{55}= x_2^k$  \\
\hline
&&\\
$123|45:24$ &   $(D+E_{12}+E_{45})$+  &  $d_{11}=d_{33}=d_{44}=x_1^k$,   \\
&&\\
 &  $(D'+E_{23}+E_{24})$  &  $d_{22}=d_{55}=x_2^k$   \\
\hline
&&\\
$145|23:24$ &  $(D+E_{14}+E_{23}+E_{24})$ +  &  $d_{11}=d_{22}=d_{55}=x_1^k$, \\
&&\\
 &  $(D'+E_{45})$  &  $d_{44}=d_{33}=x_2^k$   \\
\hline
&&\\
$125|34:13$ &  $(D+E_{12}+E_{13})$+ &  $d_{11}=d_{44}=d_{55}=x_1^k$,   \\
&&\\
 &  $(D'+E_{25}+E_{34})$  &  $d_{22}=d_{33}=x_2^k$   \\
\hline
\end{tabular}
\vspace{.25cm}
\caption{$T_n(\mathbb{F}_q), n \leq 5$.}
\label{table1}
\end{table}

\begin{table}[ht]
\centering
\renewcommand{\arraystretch}{.9}
\begin{tabular}{|c|c|c|}
\hline
Presentation &  \multicolumn{1}{|m{6cm}|}{\centering $A^k+B^k$ }  & \multicolumn{1}{|m{5cm}|}{\centering Diagonal } \\ 
\hline
&&\\
$123456$ &  $(D+E_{12}+E_{34}+E_{56})$+ &   $d_{11}=d_{33}=d_{55}=x_1^k$,  \\
&&\\
 &  $(D'+E_{23}+E_{45})$  &    $d_{22}=d_{44}=d_{66}=x_2^k$    \\
\hline
&&\\
$12|3456:13$ &  $(D+E_{12}+E_{13}+E_{45})$+  &  $d_{11}=d_{44}=d_{66}=x_1^k$,  \\
&&\\
 &  $(D'+E_{34}+E_{56})$  &   $d_{22}=d_{33}=d_{55}=x_2^k$    \\
\hline
&&\\
$123|456:14$ &  $(D+E_{12}+E_{14}+E_{56})$+  &  $d_{11}=d_{33}=d_{55}=x_1^k$,  \\
&&\\
 &  $(D'+E_{23}+E_{45})$  &   $d_{22}=d_{44}=d_{66}=x_2^k$    \\
\hline
&&\\
$1456|23:24$ &  $(D+E_{14}+E_{24}+E_{56})$+  &  $d_{11}=d_{22}=d_{55}=x_1^k$,   \\
&&\\
 &  $(D'+E_{23}+E_{45})$  &   $d_{33}=d_{44}=d_{66}=x_2^k$    \\
\hline
&&\\
$123|456:24$ &  $(D+E_{23}+E_{24}+E_{56})$+  &  $d_{11}=d_{33}=d_{44}=d_{66}=x_1^k$,   \\
&&\\
 &  $(D'+E_{12}+E_{45})$  &   $d_{22}=d_{55}=x_2^k$    \\
\hline
&&\\
$124|356:13$ &  $(D+E_{12}+E_{13}+E_{56})$+  &  $d_{11}=d_{44}=d_{55}=x_1^k$, \\
&&\\
 &  $(D'+E_{24}+E_{35})$  &   $d_{22}=d_{33}=d_{66}=x_2^k$    \\
\hline
&&\\
$14|23|56:15|25$ &  $(D+E_{14}+E_{23}+E_{56})$+  &  $d_{11}=d_{22}=d_{66}=x_1^k$,   \\
&&\\
 &  $(D'+E_{15}+E_{25})$  &   $d_{33}=d_{44}=d_{66}=x_2^k$   \\
\hline
&&\\
$1256|34:13$ &  $(D+E_{12}+E_{13}+E_{56})$+ &  $d_{11}=d_{44}=d_{55}=x_1^k$, \\
&&\\
 &  $(D'+E_{25}+E_{24})$  &   $d_{22}=d_{33}=d_{66}=x_2^k$   \\
\hline
&&\\
$12|34|56:13|35$ &  $(D+E_{12}+E_{13}+E_{56})$+  &  $d_{11}=d_{44}=d_{55}=x_1^k$,    \\
&&\\
 &  $(D'+E_{34}+E_{35})$  &   $d_{22}=d_{33}=d_{66}=x_2^k$    \\
\hline
\end{tabular}
\vspace{.25cm}
\caption{$T_6(\mathbb{F}_q)$}
\label{table2}

\end{table}
\FloatBarrier

\begin{table}[ht]
\centering
\renewcommand{\arraystretch}{.9}
\begin{tabular}{|c|c|c|}
\hline
Presentation &  \multicolumn{1}{|m{6cm}|}{\centering $A^k+B^k$ }  & \multicolumn{1}{|m{5cm}|}{\centering Diagonal } \\ 
\hline
&&\\
$134|256:35$ &  $(D+E_{25}+E_{34}+E_{35})$+  &  $d_{11}=d_{44}=d_{55}=x_1^k$    \\
&&\\
 &  $(D'+E_{13}+E_{56})$  &   $d_{22}=d_{33}=d_{66}=x_2^k$    \\
\hline
&&\\
$156|234:35$ &  $(D+E_{15}+E_{34}+E_{35})$+  &  $d_{11}=d_{33}=d_{66}=x_1^k$,     \\
&&\\
 &  $(D'+E_{23}+E_{56})$  &  $d_{22}=d_{44}=d_{55}=x_2^k$     \\
\hline
&&\\
$1234|56:34$ &  $(D+E_{12}+E_{34}+E_{35})$+  &  $d_{11}=d_{33}=d_{66}=x_1^k$,   \\
&&\\
 &  $(D'+E_{23}+E_{56})$  &  $d_{22}=d_{44}=d_{55}=x_2^k$    \\
\hline
&&\\
$12|36|45:13|14$ &  $(D+E_{12}+E_{13}+E_{14})$+  &  $d_{11}=d_{55}=d_{66}=x_1^k$,    \\
&&\\
 &  $(D'+E_{45}+E_{36})$  &  $d_{22}=d_{33}=d_{44}=x_2^k$    \\
\hline
&&\\
$145|236:24$ &  $(D+E_{14}+E_{23}+E_{24})$+  &  $d_{11}=d_{22}=d_{55}=d_{66}=x_1^k$,  \\
&&\\
 &  $(D'+E_{45}+E_{36})$  & $d_{22}=d_{33}=d_{66}=x_2^k$   \\
\hline
&&\\
$1236|245$ &  $(D+E_{12}+E_{36}+E_{45})$+  &  $d_{11}=d_{33}=d_{44}=x_1^k$,  \\
&&\\
&  $(D'+E_{23}+E_{34})$  &  $d_{22}=d_{55}=d_{66}=x_2^k$   \\
\hline
&&\\
$126|345:13$ &   $(D+E_{12}+E_{13}+E_{45})$+  &   $d_{11}=d_{44}=d_{66}=x_1^k$, \\
&&\\
 &   $(D'+E_{26}+E_{34})$  &  $d_{22}=d_{33}=d_{55}=x_2^k$    \\
\hline
&&\\
$1256|34:13$ &   $(D+E_{12}+E_{13}+E_{45})$+  &   $d_{11}=d_{44}=d_{55}=x_1^k$,\\ 
&&\\
&   $(D'+E_{25}+E_{34})$  &  $d_{22}=d_{33}=d_{66}=x_2^k$    \\
\hline
&&\\
$1256|34:13|35$ &  $(D+E_{12}+E_{13}+E_{56})$+  &  
 $d_{11}=d_{44}=d_{55}=x_1^k$,\\ 
&&\\
 &  $(D'+E_{25}+E_{34}+E_{35})$  &  
$d_{22}=d_{33}=d_{66}=x_2^k$    \\
\hline
\end{tabular}
\vspace{.25cm}
\caption{$T_6(\mathbb{F}_q)$}
\label{table3}

\end{table}
\FloatBarrier

\end{document}